\setlist{labelindent=\parindent} 
\setlist[enumerate]{label=\textit{(\roman*)},ref=(\roman*),align=left, leftmargin=*}
	\newtheoremstyle{thm} 
  	{11pt}
  	{11pt}
 	{\itshape}
  	{ }
  	{\scshape}
 	{.}
  	{ }
  	{ }
	\newtheoremstyle{definition} 
  	{11pt}
  	{11pt}
 	{\itshape}
  	{ }
  	{\scshape}
 	{.}
  	{ }
  	{ }
	\newtheoremstyle{remark} 
 	{6pt}
  	{6pt}
  	{\itshape}
  	{}
  	{\scshape}
  	{.}
  	{ }
  	{ }
	\newtheoremstyle{example} 
  	{6pt}
  	{6pt}
  	{ }
  	{}
  	{\scshape}
  	{.}
  	{ }
 	{ }
	\theoremstyle{thm}
	\newtheorem{teor}{- Theorem}[section]
	\newtheorem{thm}{- Theorem}[section]
	\newtheorem{prop}[teor]{- Proposition}
	\newtheorem{cor}[teor]{- Corollary}
	\newtheorem{lemma}[teor]{- Lemma}
	\theoremstyle{definition}
	\newtheorem{dfn}[teor]{- Definition}
	\theoremstyle{remark}
	\newtheorem{oss}[teor]{- Remark}
	\theoremstyle{example}
	\newtheorem{exs}[teor]{- Examples}
\numberwithin{equation}{section}
\newcommand{\call}[1]{\mathscr{#1}}
\newcommand{\co}{\mathsf{c}}
\newcommand{\eps}{\varepsilon}
\renewcommand{\epsilon}{\eps}
\renewcommand{\phi}{\varphi}
\renewcommand{\theta}{\vartheta}
\newcommand{\N}{\mathbb{N}}
\newcommand{\R}{\mathbb{R}}
\newcommand{\Rd}{\R^d}
\newcommand{\diverge}[1]{#1\to\infty}
\newcommand{\limsucc}[1]{\lim_{\diverge{#1}}}
\newcommand{\de}{\mathrm{d}}
\newcommand{\der}[2]{\frac{\de#1}{\de#2}}
\newcommand{\grad}{\nabla}
\newcommand{\D}{\mathrm{D}}
\newcommand{\abs}[1]{\left\vert#1\right\vert}
\newcommand{\vass}[1]{\abs{~#1~}}
\newcommand{\norm}[1]{\left\Vert#1\right\Vert}
\renewcommand{\set}[1]{\left\{\hspace{-0.1cm}~#1~\hspace{-0.1cm}\right\}}
\newcommand{\symdif}{\!\bigtriangleup\!}
\newcommand{\eqdef}{\coloneqq}
\DeclareMathOperator{\Per}{Per}
\newcommand{\PerK}{\Per_K}
\title{On the asymptotic behaviour of~nonlocal~perimeters}
\author[ ]{\textsc{Judith Berendsen}
	
	{\small Institut f\"ur Analysis und Numerik, Westf\"alische Wilhelms-Universit\"at M\"unster
	
	Einsteinstr. 62, D 48149 M\"unster, Germany
	
	\href{mailto:judith.berendsen@wwu.de}{judith.berendsen@wwu.de}
	}
	
	\vspace{6pt}
	\textsc{Valerio Pagliari}
	
	{\small Dipartimento di Matematica, Universit\`a di Pisa
	
	Largo Bruno Pontecorvo 5, 56127 Pisa, Italy
	
	\href{mailto:pagliari@mail.dm.unipi.it}{pagliari@mail.dm.unipi.it}
	}}
\date{\today}
\begin{document}
\maketitle

\begin{center}
	
\end{center}

\begin{abstract}
	We study a class of integral functionals known as nonlocal perimeters,
	which, intuitively, express a weighted interaction between a set and its complement.
	The weight is provided by a positive kernel $K$,
	which might be singular.
	
	In the first part of the paper,
	we show that these functionals are indeed perimeters in an generalised sense
	and we establish existence of minimisers for the corresponding Plateau's problem;
	also, when $K$ is radial and strictly decreasing,
	we prove that halfspaces are minimisers
	if we prescribe ``flat'' boundary conditions.
	
	A $\Gamma$-convergence result is discussed in the second part of the work.
	We study the limiting behaviour of the nonlocal perimeters
	associated with certain rescalings of a given kernel
	that has faster-than-$L^1$ decay at infinity
	and we show that the $\Gamma$-limit is the classical perimeter,
	up to a multiplicative constant that we compute explicitly.
\end{abstract}

\tableofcontents

\section{Introduction} In a qualitative way, we might think of
the perimeter of a set in the Euclidean space $\Rd$
as a measure of the locus that circumscribes the set itself.
This intuition is captured by the analytical theory
of finite perimeters sets in use nowadays,
which is grounded on Caccioppoli's seminal works and 
on the ideas developed by De Giorgi in the 1950's.
In a sloppy manner,
we may summarise as follows the gist of this theory:
identify a set with its characteristic function,
consider the distributional gradient of the latter
and define its total variation as perimeter of the given set.
A fundamental result by De Giorgi and Federer shows that if we retain this definition
the perimeter coincides with the $(d-1)$-dimensional Hausdorff measure
of a certain subset of the topological boundary,
so that consistency with the na\"\i ve idea is guaranteed.
Besides, the class of sets such that their perimeter is finite has good compactness properties,
thus it is possible to tackle various problems
that are formulated in geometric terms
via the direct method of calculus of variations;
among all the possible examples of this that could be listed,
we cite only Plateau's problem, because
we shall deal with it later on (see Theorem \ref{stm:plateau}).

Going beyond this by now well-established theory,
recently several authors have grown interested in some set functionals
that are globally referred to as \emph{nonlocal perimeters}:
for instance, a prominent case is offered by fractional perimeters
that were introduced by Caffarelli, Roquejoffre and Savin in \cite{crs:nonlocal}
and that were later extended and largely investigated 
(see for instance
\cite{adm:gammaconvergence,csv:quantitativeflatness,cv:uniformestimates,lud:anisotropicfractional,val:afractional}).
The study of nonlocal perimeters is motivated by both theory and application,
as described in the brief account given by Cinti, Serra and Valdinoci in \cite{csv:quantitativeflatness}.
Moreover, although the definition of these functionals might seem distant from De Giorgi's one
(confront \eqref{eq:PerK} and \eqref{eq:DGper}),
nonlocal perimeters resemble the classical one from various perspectives.
Actually, one can prove that they are indeed perimeters
in the sense proposed in \cite{cmp:nonlocalcurvature},
where Chambolle, Morini and Ponsiglione collect some properties
that a set functional should have in order to deserve such label;
up to minor changes, the axiomatic definition they propose is this one:

	\begin{dfn}\label{dfn:per}
	Let $\call{M}$ be the collection of all Lebesgue measurable sets in $\Rd$
	and let $\Omega\in\call{M}$ be a fixed set with strictly positive Lebesgue measure.
	Choose arbitrarily $E,F\in\call{M}$.
	A functional $p_\Omega\colon \call{M}\to[0,+\infty]$ is a \emph{perimeter in $\Omega$} if
		\begin{enumerate}
			\item\label{stm:per1} $p_\Omega(\emptyset)=0$;
			\item\label{stm:per2}  $p_\Omega(E)=p_\Omega(F)$ whenever $\abs{(E\symdif F)\cap\Omega}=0$;
			\item\label{stm:per3}  it is invariant under translations, that is $p_{\Omega+h}(E+h)=p_\Omega(E)$ for any $h\in\Rd$;
			\item\label{stm:per4}  it is finite on any set that is the closure of an open set with compact $C^2$ boundary;
			\item\label{stm:per5}  it is lower semicontinuous w.r.t. $L^1_\mathrm{loc}(\Rd)$-convergence;
			\item\label{stm:per6}  it is submodular, that is
				\begin{equation}
				p_\Omega(E\cap F) + p_\Omega(E\cup F) \leq p_\Omega(E) + p_\Omega(F).
				\end{equation}
		\end{enumerate}
	\end{dfn}
	
The authors also provide examples of functionals
that fit in this framework;
naturally, De Giorgi's perimeter is one of them,
but we also find the functional \eqref{eq:defPerK-Rd},
which stands as an instance of the analysis we carry out here.
Indeed, this paper is devoted to the study of functionals of the form
	\begin{equation}\label{eq:PerK}
	\begin{split}
	\PerK(E,\Omega) \eqdef& \int_{E\cap\Omega}\int_{E^\co\cap\Omega}K(y-x)\de y\de x \\
							& + \int_{E\cap\Omega}\int_{E^\co \cap \Omega^\co}K(y-x)\de y\de x +\int_{E\cap\Omega^\co}\int_{E^\co\cap\Omega}K(y-x)\de y\de x,
	\end{split}\end{equation}
where $E$ and $\Omega$ are Lebesgue measurable sets in $\Rd$
and $K\colon \Rd \to [0,+\infty]$ is a Lebesgue measurable function
on which we prescribe suitable conditions.

In Section \ref{sec:over}, as a preliminary step,
we consider a general interaction functional between two sets:
	\[L_K(E,F)=\int_F\int_E K(y-x)\de y\de x\]
and we describe some of its basic properties.
We show that, 
when one of the sets of the couple has finite classical perimeter
the interaction is bounded by the $\mathrm{BV}$ norm of that set.
This, combined with the expression
of $\PerK(\,\cdot\,,\Omega)$ in terms of suitable couplings $L_K$,
comes in handy to prove that the functional in \eqref{eq:PerK}
is a perimeter according to Definition \ref{dfn:per}.

Once we know that \ref{eq:PerK} defines a perimeter,
in Subsection \ref{ssec:minsurf} we provide an existence result
for nonlocal minimal surfaces, i.e. sets that minimise $\PerK(\,\cdot\,,\Omega)$
among all the sets that coincide with a given one outside $\Omega$.
The proof of this takes into account the extension
of the perimeter functional to measurable functions that range in $[0,1]$
and it exploits the convexity of this extension;
in turn, convexity relies on the submodularity of $\Per(\,\cdot\,,\Omega)$
and on the validity of a generalised Coarea Formula.
Moreover, when the perimeter is built from a radial, strictly decreasing kernel
we are able to show that minimisers for Plateau's problem
with ``flat'' boundary conditions are halfspaces.

Section \ref{sec:Gamma} is devoted to a $\Gamma$-convergence argument.
We let $\Omega$ be an open bounded set with Lipschitz boundary
and we focus on the family of perimeter functionals $J_\epsilon(\,\cdot\,,\Omega)$
induced by mass preserving rescalings of a fixed kernel $K$, that is
	\begin{equation}\label{eq:rescK}
	K_\epsilon(h)\eqdef\frac{1}{\eps^d}K\left(\frac{h}{\eps}\right).
	\end{equation}
We are interested in the limiting behaviour of the ratios
$\frac{1}{\eps}J_\epsilon(\,\cdot\,,\Omega)$;
precisely, our intent is showing that
they $\Gamma$-converge w.r.t. the $L^1_\mathrm{loc}(\Rd)$-metric
to the classical perimeter in $\Omega$,
up to the multiplicative constant
	\begin{equation}\label{eq:cK}
	c_K \eqdef \frac{1}{2}\int_{\Rd} K(h)\abs{h_d}\de h
	\end{equation}
($h_d$ is the last component of the vector $h$).
Notice that the scaling factor $\frac{1}{\eps}$ is necessary to rule out trivial conclusions:
indeed, we have $\lim_{\eps \to 0} J_\epsilon(E,\Omega) =0$.
For the sake of completeness,
we recall the notion of $\Gamma$-convergence:
\begin{dfn}
	Let $(X,\de)$ be a metric space.
	The family $f_\epsilon\colon X\to[-\infty,+\infty]$
	$\Gamma$-converges w.r.t. the metric $\de$
	to the function $f_0\colon X\to [-\infty,+\infty]$ as $\epsilon\to0$ if
		\begin{enumerate}
			\item for any $x_0\in X$ and for any $\set{x_\eps}\subset X$ such that
					$x_\epsilon\to x_0$ it holds
					\[f_0(x_0)\leq\liminf_{\epsilon\to 0}f_\eps(x_\eps)\] 
			\item for any $x_0\in X$ there exists $\set{x_\eps}\subset X$ such that
					$x_\epsilon\to x_0$ and
					\[\limsup_{\epsilon\to 0}f_\eps(x_\eps)\leq f_0(x_0).\] 
		\end{enumerate}
\end{dfn}
The proofs of the inferior and of the superior limit inequality have very different natures:
in the first case (Subsection \ref{ssec:lli}),
we use a compactness criterion
to reduce the desired inequality to a density estimate,
while in the second (Subsection \ref{ssec:uli})
we give a pointwise convergence result
and then we conclude by a density lemma.
In spite of this diversity, there is a key point which is shared by the two arguments,
namely the possibility of controlling the rescaled interactions
	 \[\frac{1}{\eps}\int_F\int_E K_\epsilon(y-x)\de y \de x\]
in the limit $\eps\to0$: when $E$ and $F$ do not overlap,
Proposition \ref{stm:locdef} shows that asymptotically
these functionals either vanish or they are uniformly bounded,
depending on the mutual position of $E$ and $F$.

Before setting off the analysis,
we fix the notation we adopt throughout the paper
and we premise some reminders about the theory of finite perimeter sets.		
All the study is carried out
in the vector space $\Rd, d\geq 1,$ endowed with the Euclidean inner product $\,\cdot\,$
and the Euclidean norm $\abs{\,\cdot\,}\,$.
We shall often consider a reference set $\Omega \subset \Rd$,
assuming that it is open, connected and bounded.
When $\lambda>0$, $h\in\Rd$ and $E\subset\Rd$
we write $\lambda E +h$
to denote the set obtained from $E$
firstly by the dilation of factor $\lambda$ and then by the translation by $h$.
For any set $E\subset \Rd$, $E^\co$ is the complement of $E$ in $\Rd$
and $\chi_E$ denotes its characteristic function,
while $\abs{E}$ stands as its $d$-dimensional Lebesgue measure. We use the symbols 
$\call{L}^d$ and $\call{H}^{d-1}$ to denote respectively
the $d$-dimensional Lebesgue and the $(d-1)$-dimensional Hausdorff measure.
$\call{M}$ is the collection of all Lebesgue measurable sets in $\Rd$.
We shall systematically identify sets with their characteristic functions;
in particular, by saying that a sequence $\set{E_n}$ converges in $L^1_\mathrm{loc}(\Rd)$ to $E$
we mean that for any compact set $C$, the measure of the intersection   
$(E_n \symdif E)\cap C$ tends to $0$ as $n$ diverges.
There are two sets that play a distinguished role in what follows:
the halfspace
	\[H\coloneqq \set{x=(x_1,x_2,\dots,x_d)\in\Rd : x_d < 0 }\]
and the open unit cube
	\[U\coloneqq \left(-\frac{1}{2},\frac{1}{2}\right)^{\!d}.\]

If $u$ is a function, we use the symbols $\grad u$ and $\D u$
to denote respectively the classical and the distributional gradient of $u$;
in particular, $\D u$ is a $\Rd$-valued measure.
If $\Omega\subset\Rd$ is open,
we say that $u$ is a \emph{function of bounded variation}
when it belongs to $L^1(\Omega)$
and the total variation of the distributional gradient $\abs{\D u}$ is finite on $\Omega$:
	\[\mathrm{BV}(\Omega)\eqdef\set{u\in L^1(\Omega) : \abs{\D u}(\Omega) \mbox{ is finite}}.\]
This space can be characterised in terms of the $L^1$-norm of difference quotients:
	\begin{prop}\label{stm:charBV}
		Let $\Omega\subset \Rd$ be an open subset.
		Then, $u\colon\Omega \to \R$ is a function of bounded variation in $\Omega$ if and only if
		there exists a constant $c\geq 0$ such that
		for any $\Omega'$ compactly contained in $\Omega$ and
		for any $h\in\Rd$ with $\abs{h}<\mathrm{dist}(\Omega',\Omega^\co)$ it holds
			\[\norm{\tau_h u -u}_{L^1(\Omega')}\leq c\abs{h},\]
		where $\tau_h u(x)\eqdef u(x+h)$.
		In particular, it is possible to choose $c=\abs{\D u}(\Omega)$.
	\end{prop}
When $E\subset\Rd$ is a measurable set such that $\chi_E $ is a function of bounded variation
in a certain reference set $\Omega$, we say that $E$ has \emph{finite perimeter} in $\Omega$
or that it is a \emph{Caccioppoli set} in $\Omega$
and we put
	\begin{equation}\label{eq:DGper}
		\Per(E,\Omega)\eqdef\abs{\D \chi_E}(\Omega).
	\end{equation}
To recall the key result by De Giorgi and Federer,
let us consider for any $x\in \mathrm{supp}\abs{\D\chi_E}$ the Radon-Nikodym derivative
	\[\hat n(x)\coloneqq \der{\D\chi_E}{\abs{\D\chi_E}}(x) =
		\lim_{r\to 0^+}\frac{\D\chi_E(B(x,r))}{\abs{\D\chi_E}(B(x,r))};\]
here $B(x,r)$ is the open ball of centre $x$ and radius $r>0$.
We call the set
	\[\partial^\ast E\eqdef \set{x\in \Rd : \hat n(x) \mbox{ exists and has norm } 1}\]
the \emph{reduced boundary} of $E$ and when $x\in\partial^\ast E$
we also say that $\hat n(x)$ is the measure theoretic \emph{inner normal} to $E$ in $x$.
Now, the cited theorem states that if $E$ is a measurable set,
then $\partial^\ast E$ is $(d-1)$-rectifiable and 
$\D\chi_E=\hat n\chi_{\partial^\ast E}\call{H}^{d-1}$
so that 
	\begin{equation}\label{eq:per-Haus}
	\Per(E,\Omega)=\call{H}^{d-1}(\partial^\ast E\cap\Omega);
	\end{equation}
for this reason we shall call $\call{H}^{d-1}\llcorner \partial^\ast E$
the \emph{perimeter measure of} $E$.
In addition, for any $x\in\partial^\ast E$ there exists $R_x\in SO(d)$ such that
	\begin{equation}\label{eq:blowup}
	\frac{E-x}{r} \to R_x H \quad\mbox{in } L^1_\mathrm{loc}(\Rd) \mbox{ as } r\to 0^+.
	\end{equation}
We shall invoke these properties when proving the inferior limit inequality
of the $\Gamma$-convergence theorem.

For further details about the theory of functions of bounded variations and finite perimeter sets
we refer to the monographs by Ambrosio, Fusco and Pallara \cite{afp:functionsof} and by Maggi \cite{mag:setsof}.

\addcontentsline{toc}{subsection}{Acknowledgements}
\subsection*{Acknowledgements}
The authors gratefully thank Giovanni Alberti and Matteo Novaga for fruitful discussion.
Support for this work was provided by the Project PRA 2017-23 of the University of Pisa.
The stay of the first author in Pisa and of the second author in M\"unster
were funded respectively
by the DAAD by means of the Bundesministerium f\"ur Bildung und Forschung (BMBF)
and by ERC via Grant EU FP7 - ERC Consolidator Grant 615216 LifeInverse;
the authors thank the hosting universities for the kind hospitality.

\section{An overview of nonlocal perimeters}\label{sec:over}
	\subsection{The nonlocal perimeter associated with an integral kernel} Let $K\colon \Rd \to [0,+\infty]$ be a measurable function and
$E$, $F$ be sets in $\call{M}$;
we define the \emph{nonlocal $K$-interaction} between $E$ and $F$ as
	\[L_K(E,F)\coloneqq\int_F\int_E K(y-x)\de y\de x.\]
We can view this functional as the quantity of energy
that is stored in the couple of sets
because of the interaction expressed by the kernel $K$.

Notice that by Tonelli's Theorem
	\[L_K(E,F) = L_K(F,E) = \int_{E\times F}K(y-x)\de y \de x,\]
thus it is not restrictive to assume $K$ to be even:
	\begin{equation}\label{eq:Keven}
	K(h)=K(-h) \quad\mbox{for any } h\in\Rd.
	\end{equation}
The following facts can be derived in a straightforward manner:
	
	\begin{lemma}\label{stm:LK}
	Let $K\colon \Rd \to [0,+\infty]$ be a measurable function
	such that \eqref{eq:Keven} is satisfied. Then,
		\begin{enumerate}
		\item $L_K$ ranges in $[0,+\infty]$,
			and it vanishes if one its arguments has zero Lebesgue measure;
		\item for any $E_1,E_2,F\in\call{M}$, we have
				\[\begin{gathered}
				L_K(E_1,F) = L_K(E_2,F) \quad\mbox{if } \abs{E_1\symdif E_2}=0 \quad\mbox{and}\\
				L_K(E_1\cup E_2,F) = L_K(E_1,F)+L_K(E_2,F) \quad\mbox{if } \abs{E_1\cap E_2}=0;
				\end{gathered}\]
		\item for any $\lambda>0$ and $h\in\Rd$,
				\[L_K(\lambda E+h,F) = \lambda^{2d}\int_E\int_{\frac{1}{\lambda}(F-h)} K(\lambda(y-x))\de y\de x;\]
			in particular, $L_K$ is left unchanged if
			both arguments are translated by the same vector;
		\item the following equality holds:
			\begin{equation}\label{eq:LKvariant}
			L_K(E,F) = \int_{\Rd}K(h)\abs{E \cap (F-h)} \de h.
			\end{equation}
		\end{enumerate}	
	\end{lemma}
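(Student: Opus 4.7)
The plan is to handle each of the four items as a direct consequence of the integral definition of $L_K$ combined with Tonelli's theorem and elementary changes of variable; nothing is deep here, so the task is essentially careful bookkeeping.

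For \emph{(i)}, since $K\geq 0$, the iterated integral $L_K(E,F)$ is well defined as an element of $[0,+\infty]$; if either argument has null Lebesgue measure then its characteristic function vanishes almost everywhere, so the integrand is null a.e.\ and $L_K(E,F)=0$. For \emph{(ii)}, I would rewrite
\[L_K(E,F) = \int_{\Rd}\int_{\Rd} K(y-x)\chi_E(y)\chi_F(x)\,\de y\,\de x.\]
If $\abs{E_1\symdif E_2}=0$, then $\chi_{E_1}=\chi_{E_2}$ almost everywhere and the integrals agree; if $\abs{E_1\cap E_2}=0$, the identity $\chi_{E_1\cup E_2}=\chi_{E_1}+\chi_{E_2}$ holds almost everywhere and linearity of the integral yields the claimed additivity.

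For \emph{(iii)}, I would successively apply the substitutions $x=\lambda\tilde x+h$ in the integral over $\lambda E+h$ and $y=\lambda\tilde y+h$ in the integral over $F$; each produces the Jacobian factor $\lambda^d$ and transforms $y-x$ into $\lambda(\tilde y-\tilde x)$, so the stated formula drops out. The translation invariance is then immediate by specialising to $\lambda=1$ and applying the identity to the pair $(E+h,F+h)$.

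For \emph{(iv)}, starting once more from the characteristic-function form and using Tonelli together with the change of variables $h=y-x$ (so $y=x+h$), the inner integral becomes the measure of $\set{x\in F : x+h\in E} = F\cap(E-h)$, whence
\[L_K(E,F) = \int_{\Rd} K(h)\,\abs{F\cap(E-h)}\,\de h.\]
A further substitution $h\mapsto -h$, permitted by the evenness of $K$ in \eqref{eq:Keven}, combined with the translation-invariance identity $\abs{F\cap(E+h)}=\abs{E\cap(F-h)}$, delivers the stated form. All appeals to Tonelli are justified by nonnegativity of the integrand, so no summability issues arise; the only point to watch is the ordering of the substitutions in \emph{(iii)}, and I anticipate no genuine obstacle.
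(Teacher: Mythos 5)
Your proof is correct, and it is exactly the routine verification the paper has in mind: the authors state Lemma \ref{stm:LK} without proof, declaring the facts ``straightforward'', and your bookkeeping via Tonelli, the characteristic-function rewriting, and the changes of variables (with evenness of $K$ invoked where needed in \emph{(iii)} and \emph{(iv)}) supplies precisely the omitted details. Nothing to object to.
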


One may ask when the interaction $L_K$ is finite;
clearly, the answer heavily depends on the summability assumptions on $K$.
For instance, let us admit provisionally that $K$ is $L^1(\Rd)$;
then, from \eqref{eq:LKvariant} we see that
$L_K(E,F)$ is finite as soon as one of either $E$ or $F$ has finite Lebesgue measure
and it holds
	\begin{equation}\label{eq:estimateLK}
	L_K(E,F)\leq \norm{K}_{L^1(\Rd)}\min\set{\abs{E},\abs{F}}.
	\end{equation}
Further, assume that the support of $K$ is contained in a ball of radius $r$: we get
	\begin{equation*}\label{eq:nonloc}
	L_K(E,F)=\int_{\set{x\in E : \mathrm{dist}(x,F)<r}} \int_{\set{y\in F : \mathrm{dist}(y,E)<r}} K(y-x)\de y \de x,
	\end{equation*}
which shows that, for each set in the couple,
the points that play a major role are the ones that lie near to the other set;
similarly, in the general case, we expect that points that are separated by a large distance
have smaller influence on the total interaction $L_K$. 
We shall come back to this point later on,
when we consider the behaviour of functionals induced
by mass-preserving rescalings of $K$.  

From now on, we assume that	
	\begin{equation} \label{eq:summK}
	\begin{gathered}
	K\colon \Rd \to [0,+\infty) \mbox{ is a measurable even function such that} \\
	h\mapsto K(h)\min\set{1,\abs{h}} \mbox{ belongs to } L^1(\Rd).	
	\end{gathered}
	\end{equation}
If \eqref{eq:summK} is fulfilled, we are in position to prove that
the nonlocal interaction between a couple of sets is finite
provided we have some information on the mutual positions.
Indeed, if two sets overlap on a region of full Lebesgue measure,
we cannot expect $L_K$ to be finite
because $K$ might not be summable around the origin.

	\begin{prop}\label{stm:LKfinite}
		Let $E$ and $F$ be sets with strictly positive Lebesgue measure
		and let us assume that $E$ has finite perimeter in $\Rd$ and that $\vass{E\cap F}=0$.
		If \eqref{eq:summK} holds, then
			\begin{equation}
			L_K(E,F)\leq c(E)\int_{\Rd} K(h)\min\set{1,\abs{h}} \de h,
			\end{equation}
		where $c(E)\eqdef \max\set{\abs{E},\frac{\Per(E)}{2}}$.		
	\end{prop}
	\begin{proof}
		Up to Lebesgue negligible sets, $F\subset E^\co$; therefore,
			\[\begin{split}
			L_K(E,F) \leq & L_K(E,E^\co)=\frac{1}{2}\int_{\Rd}\int_{\Rd} K(h)\abs{\chi_E(x+h)-\chi_E(x)}\de x\de h \\
						= & \frac{1}{2}\int_{\set{\abs{h}<1}}K(h)\int_{\Rd}\abs{\chi_E(x+h)-\chi_E(x)}\de x\de h \\
						  &	+ \frac{1}{2}\int_{\set{\abs{h}\geq1}}K(h)\int_{\Rd}\abs{\chi_E(x+h)-\chi_E(x)}\de x\de h;
			\end{split}\]
		we estimate the last integral by the triangle inequality, while
		the assumption that $\chi_E$ is a function of bounded variation on $\Rd$
		provides the upper bound
			\[\int_{\Rd}\abs{\chi_E(x+h)-\chi_E(x)}\de x\leq\Per(E)\abs{h} \]
		(recall Proposition \ref{stm:charBV}) and hence, on the whole, we get
			\[	L_K(E,F) \leq \frac{1}{2}\Per(E)\int_{\set{\abs{h}<1}}K(h)\abs{h}\de h
						  + \abs{E}\int_{\set{\abs{h}\geq1}}K(h)\de h. \]			
	\end{proof}
Now, we use the functional $L_K$
to recall the definition of nonlocal perimeter.
We firstly fix a reference set $\Omega\in\call{M}$
and, to avoid trivialities,
hereafter we always assume that it has strictly positive measure.
Let us define the \emph{nonlocal perimeter} of a set $E\in\call{M}$ in $\Omega$:
	\begin{equation}\label{eq:defPerK-O}
	\begin{split}
	\PerK(E,\Omega) \eqdef& L_K(E\cap\Omega,E^\co\cap\Omega) \\
						& + L_K(E\cap\Omega,E^\co \cap \Omega^\co) + L_K(E\cap\Omega^\co,E^\co\cap\Omega);
	\end{split}
	\end{equation}
as a particular case, we set
	\begin{equation}\label{eq:defPerK-Rd}
	\PerK(E)\coloneqq\PerK(E,\Rd) = L_K(E,E^\co)
	\end{equation}
and we observe that $\PerK(E,\Omega)=L_K(E,E^\co)=\PerK(E)$ whenever $\vass{E\cap\Omega}=0$.
These positions rely on the intuitive notion of perimeter
that we discussed in the introductory section:
we attempt to identify the locus that divides a set $E$ from its complement
and we do this by considering suitable $K$-couplings between $E$ and $E^\co$.
On one hand, this is evident from Definition \eqref{eq:defPerK-Rd},
on the other this is true for Definition \eqref{eq:defPerK-O} as well,
the only difference being 
the omission of the interactions that arise inside $\Omega^\co$.
More precisely, one can understand the nonlocal perimeter of a set $E$ in $\Omega$
as being made of two contributions:
the former is expressed by the summand $L_K(E\cap\Omega,E^\co\cap\Omega)$
and it encodes the energy that is located in $\Omega$,
while the latter is provided by $L_K(E\cap\Omega,E^\co \cap \Omega^\co) + L_K(E\cap\Omega^\co,E^\co\cap\Omega)$
and it captures the energy that ``flows''
through the portions of the boundaries that $E$ and $\Omega$ share.
When treating the $\Gamma$-convergence of the perimeter,
we shall see that these different natures give birth to distinct asymptotics.

We gather here some examples of kernels that fulfil the assumptions in \eqref{eq:summK}

\begin{exs}
	Of course, perimeters associated to $L^1$ kernels fit into our theory.
	Outside this class, a relevant example is given by fractional kernels
	(\cite{crs:nonlocal,lud:anisotropicfractional}), that is
	\[K(h)=\frac{a(h)}{\abs{h}^{d+s}},\]
	where $s\in(0,1)$ and $a\colon \Rd \to \R$ is a measurable even function
	such that $0<m\leq a(h)\leq M$ for any $h\in\Rd$ for some positive $m$ and $M$.
	A third case is represented by the kernels
	we shall deal with most of the times in the sequel,
	namely the functions $K\colon \Rd \to [0,+\infty)$
	such that the map $h\mapsto K(h)\abs{h}$ is $L^1$;
	observe that this summability assumption allows for a fractional-type behaviour near the origin,
	but it also implies faster-than-$L^1$ decay at infinity. 
\end{exs}

By now, the literature concerning nonlocal-perimeter-like functionals is expanding.
For instance, the mentioned class fractional perimeters, i.e.
	\[\begin{split}
		\Per_s(E,\Omega) \eqdef & \int_{E\cap\Omega}\int_{E^\co\cap\Omega}\frac{\de y\de x}{\abs{y-x}^{d+s}} \\
								& +\int_{E\cap\Omega}\int_{E^\co\cap\Omega^\co}\frac{\de y\de x}{\abs{y-x}^{d+s}}
								+ \int_{E\cap\Omega^\co}\int_{E^\co\cap\Omega}\frac{\de y\de x}{\abs{y-x}^{d+s}}
	\end{split}\]
has been extensively studied; here, we wish to mention just
\cite{crs:nonlocal}, where existence and regularity of solutions to Plateau's problem are dealt with,
and the papers \cite{cv:uniformestimates} by Caffarelli and Valdinoci
and \cite{adm:gammaconvergence} by Ambrosio, De Philippis and Martinazzi,
where the limiting behaviour  as $s\to1^-$
of $\Per_s(\,\cdot\,,\Omega)$ and of the related minimal surfaces are discussed.
The analysis for general kernels $K$ has been carried out in several directions as well
and, as a short selection of known results, we cite
the flatness properties for minimal surfaces in \cite{csv:quantitativeflatness},
the existence of isoperimetric profiles established by Cesaroni and Novaga in \cite{cn:theisoperimetric}
and the study of nonlocal curvatures by Maz{\'o}n, Rossi and Toledo in \cite{mrt:nonlocalperimeter}.

We close this Subsection by proving
that the functional $\PerK$ is a perimeter
in the axiomatic sense introduced in \cite{cmp:nonlocalcurvature}.
Starting from the properties of $L_K$ that are shown in Lemma \ref{stm:LK},
it is easy to check that
statements \ref{stm:per1}, \ref{stm:per2} and \ref{stm:per3}
in Definition \ref{dfn:per} hold true;
in addition, once one has observed that
	\begin{equation}\label{eq:semicont}
	\begin{split}
	\PerK(E,\Omega)=&\,\frac{1}{2}\int_\Omega\int_\Omega K(y-x)\abs{\chi_E(y)-\chi_E(x)}\de y\de x \\
					&+ \int_{\Omega}\int_{\Omega^\co}K(y-x)\abs{\chi_E(y)-\chi_E(x)}\de y \de x,
	\end{split}
	\end{equation}
semicontinuity \ref{stm:per5} follows by Fatou's Lemma.
To prove submodularity \ref{stm:per6} 
it suffices to decompose the involved sets in a suitable manner:
for instance, one can find
	\[\begin{split}
	L_K((E\cup F)\cap \Omega,&\,E^\co \cap F^\co\cap\Omega) \\
		=\,&\, L_K(E\cap\Omega,E^\co \cap\Omega)+L_K(F\cap\Omega,F^\co \cap\Omega) \\
		&\, -L_K(E\cap\Omega,E^\co \cap F\cap\Omega)-L_K(F\cap\Omega,E\cap F^\co \cap\Omega) \\
		&\, - L_K(E\cap F\cap \Omega,E^\co\cap F^\co\cap \Omega)
	\end{split}\]
and
	\[\begin{split}
	L_K(E\cap F\cap \Omega,\,&\,(E^\co \cup F^\co)\cap\Omega) \\
		=\,&\, L_K(E\cap F\cap \Omega,E^\co\cap F^\co\cap \Omega) \\
			&\,+L_K(E\cap F\cap \Omega,E^\co \cap F\cap\Omega)+L_K(E\cap F\cap \Omega,E\cap F^\co \cap\Omega),
	\end{split}\] 
so that on the whole one gets
	\[\begin{split}
	\PerK(E,\Omega)+&\PerK(F,\Omega) \\
		=\,&\, \PerK(E\cap F,\Omega)+\PerK(E\cup F,\Omega)   \\
		&\, +2 L_K(E\cap F^\co\cap \Omega,E^\co\cap F\cap \Omega)
			+2L_K(E\cap F^\co\cap \Omega,E^\co\cap F\cap \Omega^\co) \\
		&\,	+2L_K(E\cap F^\co\cap \Omega^\co,E^\co\cap F\cap \Omega).
	\end{split}\]
Eventually, we are left to show that also \ref{stm:per4} is satisfied.

\begin{prop}\label{stm:valdinoci}
	Let us assume that \eqref{eq:summK} holds
	and suppose that $\Omega$ is an open set with finite Lebesgue measure.
	Then, if $E$ is a Caccioppoli set in $\Rd$,
		\[\PerK(E,\Omega)\leq c(E,\Omega)\int_{\Rd} K(h)\min\set{1,\abs{h}} \de h,\]
	where $c(E)\eqdef \max\set{\frac{\Per(E)}{2},\abs{\Omega}}$.
	In particular,$E$ has finite nonlocal $K$-perimeter in $\Omega$ as well and
	$\PerK(\,\cdot\,,\Omega)$ is a perimeter
	in the sense of Definition \ref{dfn:per}.
\end{prop}
\begin{proof}
	The conclusion can be obtained imitating the proof of Proposition \ref{stm:LKfinite};
	see also Proposition \ref{stm:JK-BV}.
\end{proof}
	\subsection{Extension to functions and nonlocal minimal surfaces}\label{ssec:minsurf} Of course one is led to consider the perimeter
as a geometric property attached to a set;
nevertheless, we know that
the classic notion by De Giorgi can be casted
in the framework of functions of bounded variation.
Here, we present a construction of the same flavour,
whose aim is extending the functional $\PerK$ to functions.
This can be achieved in a natural way: 
grounding on identity \eqref{eq:semicont},
we are induced to set for any measurable $u\colon \Rd \to \R$
	\begin{equation}\begin{split}
	J_K^1(u,\Omega) &\coloneqq\int_\Omega\int_\Omega K(y-x)\abs{u(y)-u(x)}\de x\de y, \\
	J_K^2(u,\Omega) &\coloneqq\int_{\Omega}\int_{\Omega^\co}K(y-x)\abs{u(y)-u(x)}\de x \de y \quad\mbox{and} \\
	J_K(u,\Omega) &\coloneqq \frac{1}{2}J_K^1(u,\Omega) + J_K^2(u,\Omega).
	\end{split}
	\end{equation}
We shall refer to $J_K(\,\cdot\,,\Omega)$ as \emph{nonlocal $K$-energy functional}
and it can be easily seen that it is lower semicontinuous
w.r.t. $L^1_\mathrm{loc}(\Rd)$-converge.
By a small abuse of notation,
we shall write $J_K^i(E,\Omega)$ for $i=1,2$ and $J_K(E,\Omega)$ when the functionals are evaluated
on the characteristic function of $E$, so that
\begin{equation*}\begin{split}
\frac{1}{2}J_K^1(E,\Omega) &= L_K(E\cap\Omega,E^\co\cap\Omega) \\
J_K^2(E,\Omega) &= L_K(E\cap\Omega,E^\co \cap \Omega^\co)
+ L_K(E^\co\cap\Omega,E\cap\Omega^\co)\quad\mbox{and} \\
\PerK(E,\Omega) &= J_K(E,\Omega) =\frac{1}{2}J_K^1(E,\Omega) + J_K^2(E,\Omega).
\end{split}
\end{equation*}
In view of these equalities, we shall informally say that
the functional $J^1_K(\,\cdot\,,\Omega)$ is the local contribution to the perimeter,
while $J^2_K(\,\cdot\,,\Omega)$ is the nonlocal one.

In the previous subsection,
we gave some heuristic justification to the definition of nonlocal perimeter
and then we also proved that this object owns certain ``reasonable'' properties;
amongst them, there is the finiteness of the $K$-perimeter for regular sets.
Actually, if we suppose that
\begin{description}
	\item[C1]\label{stm:C1} $\Omega$ is an open, connected and bounded subset of $\Rd$
			with Lipschitz boundary and that
	\item[C2]\label{stm:C2} $K\colon \Rd \to [0,+\infty)$ is a measurable
			even function such that the quantity
	\begin{equation}\label{eq:c'K}
	c'_K \eqdef \int_{\Rd} K(h)\abs{h}\de h \quad\mbox{is finite,}
	\end{equation}
\end{description}
then not only the theory we have developed so far applies,
but we can also prove a broader result involving functions of bounded variation
that yields a conclusion
which is similar in spirit to the one of  Proposition \ref{stm:valdinoci}.

\begin{prop}\label{stm:JK-BV}
	Let us assume that conditions \textbf{C1} and \textbf{C2} are fulfilled.
		\begin{enumerate}
			\item If $\Omega$ is convex and $u\in\mathrm{BV}(\Omega)$,
				then
					\begin{equation}\label{eq:JK1-BV}
					J_K^1(u,\Omega) \leq c'_K \abs{\D u}(\Omega).
					\end{equation}
			\item If $u\in C^1(\Rd)\cap\mathrm{BV}(\Rd)$, then
					\begin{equation}\label{eq:JK-BV}
					J_K(u,\Omega) \leq c'_K \int_{\Rd}\abs{\nabla u}.
					\end{equation}
			\item If $u\in\mathrm{BV}(\Rd)$, \eqref{eq:JK-BV} holds as well,
				on condition that one replaces 
				the integral on the right-hand side with $\abs{\D u}(\Rd)$.
		\end{enumerate}
 
\end{prop}
\begin{proof}
	Let us firstly assume that $\Omega$ is a convex open subset in $\Rd$
	and that $u\in\mathrm{BV}(\Omega)$.
	By the change of variables $h=y-x$ we find
		\[ J_K^1(u,\Omega)=\int_{\Rd}K(h)\int_{\{x\in\Omega : x+h\in\Omega\}} \abs{u(x+h)-u(x)}\de x\de h \]
	and, subsequently, by the characterisation of $\mathrm{BV}$ functions
	recalled in Proposition \ref{stm:charBV},
		\[ J_K^1(u,\Omega)\leq \left(\int_{\Rd}K(h)\abs{h}\de h\right) \abs{\D u}(\Omega),\]
	that is \eqref{eq:JK1-BV}.
	
	Next, suppose that $u\in C^1(\Rd)\cap\mathrm{BV}(\Rd)$;
	similarly to the previous lines, we infer
		\[J_K(u,\Omega) \leq \int_{\Rd}K(h)\int_\Omega \abs{u(x+h)-u(x)}\de x\de h,\]
	but under the current hypotheses we can no longer localise the points
	of the segment from $x$ to $x+h$ and thus we integrate over the whole space:
		\[J_K(u,\Omega) \leq \int_{\Rd}K(h)\abs{h}\de h \int_{\Rd} \abs{\nabla u(\xi)}\de \xi.\]
		
	Finally, recall that if $u\in\mathrm{BV}(\Rd)$, then
	there exist a sequence $\set{u_n}\subset C^\infty(\Rd)\cap\mathrm{BV}(\Rd)$
	that converges to $u$ in $L^1(\Rd)$ and that satisfies
		\[\limsucc{n}\int_{\Rd}\abs{\nabla u_n} = \abs{\D u}(\Rd).\]
	Also, thanks to $L^1_\mathrm{loc}(\Rd)$-lower semicontinuity,
	we deduce the last statement from the second by an approximation argument.
\end{proof}	

The functional $J_K$ and the $K$-perimeter are further linked by a coarea-type result
(see \cite{csv:quantitativeflatness} and also
\cite{adm:gammaconvergence,cn:theisoperimetric,,mrt:nonlocalperimeter} for analogous statements).

\begin{prop}[Coarea formula]\label{stm:coarea}
	If $K\colon \Rd \to [0,+\infty)$ is measurable,
	then for any measurable function $u\colon \Rd \to [0,1]$
		\[ J_K^1(u,\Omega) = \int_0^1 J_K^1(\set{u>t},\Omega)\de t \quad\mbox{and}\quad
			J_K^2(u,\Omega) = \int_0^1 J_K^2(\set{u>t},\Omega)\de t
		\]
	and hence
	\[ J_K(u,\Omega) = \int_0^1\PerK(\set{u>t},\Omega)\de t \]
\end{prop}
\begin{proof}
	Given $x,y\in\Omega$, let us suppose without loss of generality that $u(x)\leq u(y)$;
	we consider the function $[0,1]\ni t\mapsto\chi_{\set{u>t}}(x)-\chi_{\set{u>t}}(y)$
	and we notice that it is different from $0$ exactly when $t\in[u(x),u(y)]$.
	Consequently,
		\[\abs{u(x)-u(y)}=\int_{0}^1 \abs{\chi_{\set{u>t}}(x)-\chi_{\set{u>t}}(y)}\de t\]
	and, by Tonelli's Theorem,
	\[\begin{split}
	J^1_K(u,\Omega) \eqdef & \int_{\Omega}\int_{\Omega}K(x-y)\abs{u(x)-u(y)}\de x\de y \\
					= & \int_0^1\int_{\Omega}\int_{\Omega} K(x-y)\abs{\chi_{\set{u>t}}(x)-\chi_{\set{u>t}}(y)}\de x\de y\de t \\
					= & \int_{0}^{1}J_K^1(\set{u>t},\Omega)\de t
	\end{split}\]
	In a similar way, one also proves that
	the equality concerning $J_K^2(u,\Omega)$ holds.
\end{proof}

The validity of coarea formula is crucial for variational purposes.
Indeed, it allows to invoke two abstract results proved in \cite{cgl:continuouslimits}
by Chambolle, Giacomini and Lussardi:

	\begin{teor}
		If $J\colon L^1(\Omega)\to [0,+\infty]$ is
		a proper lower semicontinuous functional such that
			\begin{equation}\label{eq:gencoarea}
			J(u)=\int_{-\infty}^{+\infty}J(\chi_{\set{u>t}})\de t
			\end{equation}
		and that
			\[J(\chi_{E\cap F}) + J(\chi_{E\cup F}) \leq J(\chi_{E}) + J(\chi_{F})\]
		for any couple of measurable sets in $\Omega$,
		then $J$ is convex.
	\end{teor}
	
	\begin{teor}\label{stm:cgl}
		Let $\set{J_n}_{n\in\N}$ be a sequence of convex functionals
		such that \eqref{eq:gencoarea} holds and
		let us suppose that there exists a functional
		$\tilde J$ defined on measurable sets of $\Omega$
		such that the sequence obtained by restriction
		of the functionals $J_n$ to measurable sets 
		$\Gamma$-converges to $\tilde J$ w.r.t. the $L^1$-convergence.
		Then, the sequence $\set{J_n}$ $\Gamma$-converges to $J$ w.r.t. the same norm if we put
			\[J(u)=\int_{-\infty}^{+\infty}\tilde J(\chi_{\set{u>t}})\de t.\]		
	\end{teor}
	
The latter of the two theorems above is relevant
for the discussion contained in Section \ref{sec:Gamma}
concerning the limiting properties of nonlocal perimeters.
For the moment being, we take advantage of the former and we infer

\begin{cor}
	If $K\colon \Rd \to [0,+\infty)$ is measurable,
	the functional $J_K(\,\cdot\,,\Omega)$ is convex on $L^1(\Rd;[0,1])$.
\end{cor}

At this stage, we are in position to
solve a Plateau-type problem for nonlocal perimeters
through the direct method of calculus of variations.
Notice that strong convergence of minimising sequences in $L^1$ is not guaranteed in principle,
because a uniform bound on the nonlocal perimeter is very weak information;
for example, if the kernel $K$ is $L^1$ and $\Omega$ is bounded,
then any measurable $E$ satisfies $\PerK(E,\Omega)\leq3\norm{K}_{L^1(\Rd)}\abs{\Omega}$.
We circumvent this obstacle by making use of convexity,
which permits to draw the conclusion from weak compactness only.

\begin{teor}[Existence of solutions to Plateau's problem]\label{stm:plateau}
	Let $K\colon \Rd \to [0,+\infty)$ be measurable
	and let $\Omega\subset \Rd$ be open and bounded.
	Suppose that $E_0\in\call{M}$ has finite $K$-perimeter in $\Omega$ and define
		\[\call{F}\coloneqq\set{F\in\call{M} : \PerK(F,\Omega)<+\infty \mbox{ and } F \cap\Omega^\co = E_0\cap\Omega^\co}.\]
	Then, there exists $E\in\call{F}$ such that
		\[\PerK(E,\Omega)\leq\PerK(F,\Omega) \quad\mbox{for any } F\in\call{F}.\]
	Also, any minimiser satisfies
	\begin{align}
	L_K(E,F)&\leq L_K(E^\co\cap F^\co,F) \quad \mbox{whenever } F\subset E^\co\cap\Omega \quad\mbox{and} \label{eq:optcond1}\\
	L_K(E^\co,F)&\leq L_K(E\cap F^\co,F) \quad \mbox{whenever } F\subset E\cap\Omega \label{eq:optcond2}
	\end{align}
\end{teor}
\begin{proof}	
	Let us consider a minimising sequence $\set{u_n}$
	for the more general minimisation problem 
		\[\inf\set{J_K(v,\Omega) : v\colon\Rd\to[0,1], v\mbox{ measurable},
					J_K(v,\Omega)<+\infty \mbox{ and } v\rvert_{\Omega^\co}=u_0},\]
	where $u_0\coloneqq\chi_{E_0\cap\Omega^\co}$;
	notice that the set of competitors is non-empty,
	because it contains at least $\chi_{E_0}$.
	We also observe that, for any choice of $p\in(1,+\infty)$,
	$\set{u_n}$ is bounded in $L^p(\Omega;[0,1])$ and
	therefore there exists $u\in L^p(\Omega;[0,1])$
	such that $u_n\rvert_\Omega$ weakly converges to it, up to subsequences.
	We extend $u$ outside $\Omega$ setting $u\rvert_{\Omega^\co} = u_0$
	and with this choice we get
		\[\lim_{\diverge{n}}J_K(u_n,\Omega)\geq J_K(u,\Omega).\]
	Indeed,	$J_K$ is convex and lower semicontinuous w.r.t. strong convergence in $L^1(\Rd;[0,1])$
	and hence it is also weakly lower semicontinuous in $L^p(\Omega;[0,1])$ for any $p\in[1,+\infty)$,
	which implies immediately $\liminf_{\diverge{n}}J^1_K(u_n,\Omega)\geq J^1_K(u,\Omega)$;
	the analogous inequality for the nonlocal term follows as well
	noticing that $u_n = u = u_0$ in $\Omega^\co$. 
	Hence, $u$ is a minimiser for $J_K(\,\cdot\,,\Omega)$.
		
	At this stage, the statement concerning existence is proved
	once we show that from any function that minimises $J_K(\,\cdot\,,\Omega)$
	one can recover a set $E$ that minimises $\PerK(\,\cdot\,,\Omega)$.
	To this purpose, we apply the Coarea formula: given that
		\[J_K(u,\Omega) = \int_0^1\PerK(\set{u>t},\Omega)\de t,\]
	for some $t^*\in(0,1)$ it must hold
	$J_K(u,\Omega) \geq \PerK(\set{u>t^*},\Omega)$;
	then, just set $E=\chi_{\set{u>t^*}}$.
	
	Eventually, we prove inequalities \eqref{eq:optcond1} and \eqref{eq:optcond2}.
	Suppose that $E$ minimises the perimeter and that $F\subset E^\co \cap \Omega$;
	then, the inequality $\PerK(E,\Omega)\leq\PerK(E\cup F,\Omega)$ holds
	and we rewrite it as
		\[L_K(E\cap\Omega,E^\co)+L_K(E\cap\Omega^\co,E^\co\cap\Omega) \leq 
			L_K((E\cup F)\cap\Omega,E^\co\cap F^\co)+L_K(E\cap\Omega^\co,E^\co\cap F^\co\cap\Omega).\]
	We decompose the first term in the left-hand side
	and we confront the second summands on each side, getting
		\[L_K(E\cap\Omega,F)+L_K(E\cap\Omega,E^\co\cap F^\co)+L_K(E\cap\Omega^\co,F) \leq L_K((E\cup F)\cap\Omega,E^\co\cap F^\co) \]
	and therefore we find
		\[L_K(E\cap\Omega,F)+L_K(E\cap\Omega^\co,F) \leq L_K(F,E^\co\cap F^\co), \]
	which is \eqref{eq:optcond1}.
	The other inequality can be proved similarly starting from
	$\PerK(E,\Omega)\leq\PerK(E\cap F^\co,\Omega)$.
\end{proof}

We borrowed the proof of optimality conditions from \cite{adm:gammaconvergence,crs:nonlocal},
where analogous results are stated for fractional perimeters;
notice that to the validity of \eqref{eq:optcond1} and \eqref{eq:optcond2}
no restriction on $K$ is needed.
On the contrary, to deduce some extra information on minimisers,
still following the same papers,we shall require that
	\begin{description}
		\item[C3] $\bar K\colon [0,+\infty) \to [0,+\infty)$ is a measurable function
			and for any $h\in\Rd$, $K(h)\eqdef\bar K(r)$ if $\abs{h}=r$;
		\item[C4] $\bar K$ is strictly decreasing.
	\end{description}
When $K$ satisfies condition $\textbf{C3}$,
the coupling $L_K$ is left unchanged by isometries:
	\begin{equation}\label{eq:rotinv}
		L_K(R(E),R(F))=\int_E\int_F K(R(y-x))\de y\de x = L_K(E,F) \quad \mbox{for any isometry } R
	\end{equation}

\begin{prop}[Flatness of minimisers]\label{stm:flatness} 
	Let us assume that \textbf{C3} and \textbf{C4} hold and let $E\in\call{M}$.
	\begin{enumerate}
		\item\label{stm:flat1} If \eqref{eq:optcond1} holds for $E$ with $\Omega = U$ and
		$H\cap U^\co\subset E$, then
		$H\subset E$ up to a set of measure zero.
		\item\label{stm:flat2} If \eqref{eq:optcond2} holds for $E$ with $\Omega = U$ and
		$E\cap U^\co\subset H$, then
		$E\subset H$ up to a set of measure zero.
	\end{enumerate}
	Also, the same statements hold true replacing $H$ by $H^\co$
	and if $E$ is a minimiser for the problem
	$\inf\set{\PerK(F,U): F\cap U^\co=H\cap U^\co}$,
	then, $\abs{E\symdif H}=0$.
\end{prop}
\begin{proof}
	Let us provisionally assume that \ref{stm:flat1} holds both for $H$ and $H^\co$;
	then \ref{stm:flat2} follows. Indeed, if $E$ fulfils \eqref{eq:optcond2},
	then $E^\co$ satisfies \eqref{eq:optcond1} and hence,
	by applying \ref{stm:flat1} with $H^\co$,
	we get $\abs{H^\co\cap U \cap E}=0$, as desired.
	
	Consequently, if $E$ is a solution to Plateau's problem
	\[\inf\set{\PerK(F,U): F\cap U^\co=H\cap U^\co},\]
	by Theorem \ref{stm:plateau}, \eqref{eq:optcond1} and \eqref{eq:optcond2} hold
	and thanks to the constraint $E\cap U^\co=H\cap U^\co$
	we can invoke both \ref{stm:flat1} and \ref{stm:flat2},
	thus concluding $\abs{E\symdif H}=0$.
	
	Finally, we turn to the proof of \ref{stm:flat1}.
	The idea is to apply \eqref{eq:optcond1} with a suitable competitor.
	Since we suppose $H\cap U^\co \subset E$, $F^-\eqdef H\cap E^\co$ is contained in $Q$.
	Let us put $F^+\eqdef R(F^-)\cap E^\co$ and $F\eqdef F^-\cup F^+$,
	where $R(x_1,\dots,x_{d-1},x_d)=(x_1,\dots,x_{d-1},-x_d)$.
	Notice that $F\subset E^\co \cap U$ and hence,
	taking advantage of \eqref{eq:rotinv}, we have
		\[L_K(E,F)\leq L_K(E^\co\cap F^\co, F) = L_K(G,R(F)),\]
	where $G\eqdef R(E^\co\cap F^\co)$.
	$F$ can be decomposed as the disjoint union of
	$F'\eqdef F^-\setminus R(F^+)$ and $F''\eqdef F^+\cup R(F^+)$,
	so that the inequality above becomes
		\[\begin{split}
			L_K(E,F)& \leq L_K(G,R(F'))) + L_K(G,F'') \\
					& = L_K(G,R(F')) - L_K(G,F') + L_K(G,F),
		\end{split}\]
	that is
		\[L_K(E,F) - L_K(G,F)\leq L_K(G,R(F')) - L_K(G,F').\]
	We observe that
	the left-hand side can be rewritten as $L_K(E\cap G^\co,F)$, yielding
		\[0\leq L_K(E\cap G^\co,F)\leq L_K(G,R(F')) - L_K(G,F').\]
	Nevertheless, if $F'$ is not negligible,
	the last quantity is always strictly negative because,
	for any $x\in G$, $\abs{y-x} < \abs{R(y)-x}$ if $y\in F'\cap\set{y : y_d\neq 0}$
	and $K$ is a radially strictly decreasing function;
	it follows that $\abs{F'}=0$ and either $\abs{E\cap G^\co}=0$ or $\abs{F}=0$.
	The latter of these conditions immediately implies the conclusion
	since $H\cap E^\co=F^-\subset F$.
	
	Let us assume instead that $\abs{E\cap G^\co}=0$.
	We repeat the argument that we have just outlined above
	to a perturbation of $E$; namely, for any $\eps >0$, we set $E_\eps \eqdef E + (0,0,\dots, \eps)$
	and we observe that $E_\eps$ satisfies $\eqref{eq:optcond1}$ with
	$\Omega = Q_\epsilon\eqdef Q+(0,0,\dots,\eps)$
	and thus also with $\Omega = \tilde Q_\eps \eqdef Q_\epsilon\cap R(Q_\eps)$.
	We next define $F^-_\epsilon, F^+_\epsilon, F_\epsilon, F'_\eps$ and $F''_\eps$
	in complete analogy with the sets $F^-, F^+, F, F'$ and $F''$ introduced in the previous lines
	and we infer that $\abs{F'_\eps}=0$ and either $\abs{E_\eps\cap G^\co_\eps}=0$ or $\abs{F_\eps}=0$.
	The point is that now it holds $\abs{E_\eps\cap G^\co_\eps}=\infty$,
	thus $F_\epsilon$ in necessarily negligible and $\abs{H\cap E_\eps}=0$;
	finally, let $\epsilon$ tend to $0$.	
	
	Similarly to the lines above,
	one can prove that the conclusions are not compromised if $H$ is replaced by $H^\co$
	and in this way the proof is concluded.
\end{proof}

We shall exploit the flatness result for minimisers
to prove a useful characterisation of the constant $c_K$
appearing in Theorem \ref{stm:Gammaconv},
similarly to what is done in \cite{adm:gammaconvergence}.

\section{$\Gamma$-convergence of nonlocal perimeters}\label{sec:Gamma}
	In this section we turn to a $\Gamma$-convergence result
of mass preserving rescalings of the $K$-perimeter.	
Hereafter we assume that \textbf{C1} and \textbf{C3} hold.
Let us suppose in addition that
	\begin{description}
		\item[C2'] the quantity \[\int_0^{+\infty} \bar K(r)r^d \de r \quad\mbox{is finite.}\]
	\end{description}
The combination of \textbf{C2'} and \textbf{C3} guarantees that \textbf{C2} holds as well:
indeed, the implication is trivial when $d=1$,
while when $d\geq 2$
	\[c'_K=\int_{\Rd} K(h)\abs{h}\de h = \int_0^{+\infty}\int_{\partial B(0,r)} \bar K(r)r \de\call{H}^{d-1}(z)\de r
		= d\omega_d \int_{0}^{+\infty}\bar K(r)r^d\de r.\] 
Besides, thanks to radial symmetry, if $d\geq 2$,
we have the following chain of equalities: 
	\[\begin{split}
		\int_{\Rd} K(h)\abs{h_d}\de h &=
				\int_0^{+\infty}\int_{\partial B(0,r)} \bar K(r)\abs{e_d\cdot z}\de\call{H}^{d-1}(z)\de r \\
				& = \int_{\partial B(0,1)}\abs{e_d\cdot z}\de\call{H}^{d-1}(z) \int_{0}^{+\infty} \bar K(r)r^d\de r \\
				& = \frac{\int_{\partial B(0,1)}\abs{e_d\cdot z}\de\call{H}^{d-1}(z)}{d\omega_d}\int_{\Rd} K(h)\abs{h}\de h;
	\end{split}\]
thus, recalling \eqref{eq:cK}, we have
	\begin{equation}\label{eq:h=hd}
	c_K = \frac{\alpha_{1,d}}{2} c'_K, \quad\mbox{with } 
	\alpha_{1,d} \eqdef \dfrac{\int_{\partial B(0,1)}\abs{e_d\cdot z}\de\call{H}^{d-1}(z)}{d\omega_d}.
	\end{equation}
Summing up, if we assume the validity of
\textbf{C1}, \textbf{C2'} and \textbf{C3},
then the theory of Section \ref{sec:over} applies,
the only exception being Proposition \ref{stm:flatness},
which also requires \textbf{C4}.

In view of the forthcoming analysis, 
it is convenient to fix some further notation.
For $\epsilon>0$ and $h\in\Rd$ recall position \eqref{eq:rescK}
and for $E,F\in\call{M}$ let us define the functionals
	\[\begin{gathered}
		L_\epsilon(E,F)\eqdef L_{K_\eps}(E,F), \\
		J_\epsilon^1(E,\Omega) \coloneqq J_{K_\eps}^1(E,\Omega), \quad
		J_\epsilon^2(E,\Omega) \coloneqq J_{K_\eps}^2(E,\Omega) \quad\mbox{and} \\
		J_\epsilon(E,\Omega) \coloneqq \frac{1}{2}J_\eps^1(E,\Omega) + J_\epsilon(E,\Omega).
	\end{gathered}\]

Our main goal is proving the following result:

\begin{thm}\label{stm:Gammaconv}
	Let us suppose that \textbf{C1}, \textbf{C2'} and \textbf{C3} are fulfilled
	and let $E\in\call{M}$;
	then,
	\begin{enumerate}
		\item\label{stm:Gammasup}there exist a family $\set{E_\epsilon}_{\epsilon>0}$
		that converges to $E$ in $L^1_\mathrm{loc}(\Rd)$ with the property that
		\[\limsup_{\epsilon\to0} \frac{1}{\eps}J_\epsilon(E_\eps,\Omega)\leq c_K\Per(E,\Omega);\]
		\item\label{stm:Gammainf} if \textbf{C4} holds too,
		for any family $\set{E_\epsilon}_{\epsilon>0}$
		that converges to $E$ in $L^1_\mathrm{loc}(\Rd)$,
		\[c_K\Per(E,\Omega)\leq \liminf_{\epsilon\to0} \frac{1}{\eps}J_\epsilon^1(E_\eps,\Omega).\]	
	\end{enumerate}
\end{thm}

The functionals $J_\epsilon^2(\,\cdot\,,\Omega)$ are positive
and thus, evidently, the Theorem above implies
the $\Gamma$-converge of the ratios $\frac{1}{\eps}J_\epsilon(\,\cdot\,,\Omega)$
to $c_K\Per(\,\cdot\,,\Omega)$ w.r.t. the $L^1_\mathrm{loc}(\Rd)$-distance.
The two contributions $J^1_\epsilon$ and $J^2_\epsilon$
that compound the rescaled perimeter functional $J_\epsilon$ play different roles:
qualitatively, when $\epsilon$ is small,
the former is concentrated near the portions of the boundary of $E$ inside $\Omega$,
the latter instead gathers around the portions that are close to the boundary of $\Omega$;
this is made precise by Proposition\ref{stm:pointconv},
which shows that the pointwise limit and the $\Gamma$-limit do not agree in general.

The analogous of Theorem \ref{stm:Gammaconv} for the case of fractional perimeters was established
by Ambrosio, De Philippis and Martinazzi in \cite{adm:gammaconvergence};
notice that, however, the scaling used in that work is different,
even if we can still adopt similar techniques.
In particular, following \cite{adm:gammaconvergence} and the work \cite{ab:anonlocal} by Alberti and Bellettini
concerned with anisotropic phase transitions,
we prove the lower limit inequality
via the strategy introduced by Fonseca and M\"uller in \cite{fm:relaxationof},
which amounts to turn the proof of \ref{stm:Gammainf}
into an inequality of Radon-Nikodym derivatives. 

On the other hand, proofs of upper limit inequalities are generally achieved through density arguments.
Here, we avoid this by invoking an approximation result
of the total variation due to D\'avila \cite{dav:onanopen},
as it is also done by Maz\'on, Rossi and Toledo in \cite{mrt:nonlocalperimeter}.

Combining Theorems \ref{stm:cgl} and \ref{stm:Gammaconv}
we obtain a second $\Gamma$-convergence result:

	\begin{cor}
		Let us assume that \textbf{C1}, \textbf{C2'}, \textbf{C3} and \textbf{C4} hold.
		If for any measurable $u\colon \Rd \to [0,1]$
		we define the functionals
			\[\frac{1}{\eps}J_\epsilon(u,\Omega)\eqdef \frac{1}{\eps}J_{K_\eps}(u,\Omega)
			\quad\mbox{and}\quad J_0(u,\Omega)\eqdef c_K \abs{D u}(\Omega)\]
		then, as $\epsilon$ approaches $0$,
		the family $\set{\frac{1}{\eps}J_\epsilon(\,\cdot\,,\Omega)}$ $\Gamma$-converges
		to $J_0(\,\cdot\,,\Omega)$ w.r.t. the $L^1_\mathrm{loc}(\Rd)$ distance.
	\end{cor}
	\subsection{Rescaled nonlocal interactions and compactness} To deal with the proof of Theorem \ref{stm:Gammaconv},
we need some preliminary tools.
One of them is a compactness result
which appears rather natural in a $\Gamma$-convergence framework;
a second one is in fact more related to the peculiarities of our problem
and we discuss it in the lines that follow.
 
We point out that the functional $J^1_K(\,\cdot\,,\Omega)$ is not additive on disjoint subsets
w.r.t. its second argument
and this missing property accounts exactly for nonlocality.
Indeed, if $F$ is any measurable set
and we split the domain  $\Omega$ in the disjoint regions $\Omega\cap F$ and $\Omega\cap F^\co$,
for any  measurable $u\colon\Omega \to \R$, we get
	\begin{equation}\label{eq:nonlocJK1}
		\begin{split}
			J_K^1(u,\Omega) = & J_K^1(u,\Omega\cap F)+J_K^1(u,\Omega\cap F^\co) \\
							&+2\int_{\Omega\cap F}\int_{\Omega\cap F^\co} K(y-x)\abs{u(y)-u(x)}\de x\de y.
		\end{split}
	\end{equation}
The formula above shows that
the energy that is stored in two disjoint sets 
is smaller than the energy of their union
and that the difference is precisely given by the mutual interaction,
which, following the terminology suggested in \cite{ab:anonlocal}, 
we shall call \emph{locality defect}.

When one considers characteristic functions only, it can be easily seen that
the locality defect is the sum of certain nonlocal couplings;
hence, we are induced to analyse the limiting behaviour of nonlocal rescaled interactions.
Intuitively, since the kernel $K$ decays fast at infinity
and the operation of rescaling and letting the scaling parameter tend to $0$
amounts to ``concentrate'' the mass close to the origin, we expect some control
of the limit in terms of the portion of boundary shared by the two interacting sets.
The next statement puts this heuristic picture in precise terms:	

\begin{prop}[Asymptotic behaviour of nonlocal interactions]\label{stm:locdef}
	Let us consider $E,F\in\call{M}$.
	\begin{enumerate}
		\item If there exists a Caccioppoli set $E'$ in $\Rd$   
			such that $E\subset E'$ and $F\subset (E')^\co$, then
				\begin{equation*}
				\limsup_{\epsilon\to0}\frac{1}{\eps}L_\eps(E,F)\leq \frac{c_K'}{2}\Per(E').
				\end{equation*}
		\item If $\delta\eqdef\mathrm{dist}(E,F)>0$, then
				\begin{equation*}
				\lim_{\epsilon\to0}\frac{1}{\eps}L_\eps(E,F)=0.
				\end{equation*}
	\end{enumerate}
\end{prop}
\begin{proof}
	To prove the first estimate, we bound the interaction between $E$ and $F$
	by means of the interaction between $E'$ and its complement,
	that is, the nonlocal $K_\eps$-perimeter of $E'$:
		\[\begin{split}
			\frac{1}{\eps}L_\epsilon(E,F) &\leq\frac{1}{\eps}\int_{E'}\int_{(E')^\co}K_\epsilon(y-x)\de y \de x \\
					&=\frac{1}{2\eps}\int_{\Rd}\int_{\Rd}K(h)\abs{\chi_{E'}(x+\eps h)-\chi_{E'}(x)}\de h \de x \\
					&\leq\frac{1}{2}\int_{\Rd} K(h)\abs{h}\de h\Per(E'),
		\end{split}\]
	where the last inequality is obtained by Proposition \ref{stm:charBV}.
	
	Now, let us suppose that $\delta\eqdef\mathrm{dist}(E,F)>0$. Then
		\[\begin{split}
			\frac{1}{\eps}L_\epsilon(E,F) &\leq\frac{1}{\eps\delta}\int_E\int_F K_\epsilon(y-x)\abs{y-x}\de y \de x \\
					&=\frac{1}{\delta}\int_{E}\int_{\Rd}K(h)\abs{h}\chi_F(x+\eps h)\de h \de x 
		\end{split}\]
	and we draw the conclusion applying Lebesgue's dominated convergence Theorem.
\end{proof}

The other tool we mentioned is a compactness criterion.
Before stating it, we premise a Lemma, whose proof consists of direct computations:

\begin{lemma}\label{stm:G-ineq}
	Let $G \in L^1(\mathbb{R}^d)$ be a positive function.
	Then, for any $u\in L^\infty(\Rd)$ it holds
		\begin{equation*}
		\int_{\Rd\times\Rd} (G*G)(h) \abs{u(x+h)-u(x)}\de h\de x \leq
		2 \norm{G}_{L^1(\Rd)} J_G(u,\Rd).
		\end{equation*}
	In particular, when $u$ is the characteristic function of a measurable set $E$,
		\begin{equation} \label{eq:G-ineq}
		\int_{\Rd\times\Rd} (G*G)(h) \abs{\chi_E(x+h)-\chi_E(x)}\de h\de x \leq
		4 \norm{G}_{L^1(\Rd)}\Per_G(E).
		\end{equation}
\end{lemma}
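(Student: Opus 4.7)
The plan is to unfold the convolution defining $G\ast G$ and control the resulting triple integral via the triangle inequality applied at a well-chosen intermediate translate. Concretely, writing $(G\ast G)(h)=\int_{\Rd}G(k)G(h-k)\de k$ turns the left-hand side of the desired estimate into
\[\int_{\Rd^3}G(k)G(h-k)\abs{u(x+h)-u(x)}\de k\de h\de x,\]
and inserting the value $u(x+k)$ via $\abs{u(x+h)-u(x)}\le\abs{u(x+h)-u(x+k)}+\abs{u(x+k)-u(x)}$ splits this into two summands $A_1$ and $A_2$ to be reduced independently.

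In $A_1$, which retains the factor $\abs{u(x+h)-u(x+k)}$, the change of variables $h\mapsto h'\eqdef h-k$ (at fixed $k$) followed by the translation $x\mapsto x-k$ decouples the $k$-integration and pulls out a factor $\norm{G}_{L^1(\Rd)}$ in front of the finite-difference integral $\int_{\Rd\times\Rd}G(h')\abs{u(x+h')-u(x)}\de h'\de x$. In $A_2$ the only $h$-dependence left sits in $G(h-k)$, so $\int_{\Rd}G(h-k)\de h=\norm{G}_{L^1(\Rd)}$ may be extracted immediately, leaving the same type of finite-difference integral. Identifying the latter with $J_G^1(u,\Rd)$ through the substitution $y=x+h$ and recalling that $\Omega^\co=\emptyset$ annihilates the nonlocal term so $J_G(u,\Rd)=\tfrac{1}{2}J_G^1(u,\Rd)$, summing $A_1$ and $A_2$ then yields the stated bound on the left-hand side.

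The characteristic-function inequality \eqref{eq:G-ineq} follows by specialisation: for $u=\chi_E$ one has $J_G^1(\chi_E,\Rd)=2L_G(E,E^\co)=2\Per_G(E)$, so that $\Per_G(E)$ appears on the right-hand side after applying the first inequality to $\chi_E$. Conceptually no obstacle is foreseen, the proof being essentially a bookkeeping exercise. The only point that calls for a moment of care is the decoupling of variables in $A_1$, where Tonelli's theorem and the translation invariance of Lebesgue measure must be invoked to reorder integrations and relabel variables; this is unproblematic because $G\ge 0$ and $u\in L^\infty(\Rd)$, so every integrand remains in $[0,+\infty]$ and all the manipulations are justified by positivity alone.
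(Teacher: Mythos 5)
Your strategy --- unfolding $(G\ast G)(h)=\int G(k)G(h-k)\de k$, inserting the intermediate value $u(x+k)$ via the triangle inequality, and decoupling the two resulting terms by Tonelli and translation invariance --- is exactly the ``direct computation'' the paper has in mind, and every interchange is indeed licensed by positivity. The one problem is your final bookkeeping. Each of your two terms reduces to
\begin{equation*}
A_1=A_2=\norm{G}_{L^1(\Rd)}\int_{\Rd\times\Rd}G(h)\abs{u(x+h)-u(x)}\de h\de x
=\norm{G}_{L^1(\Rd)}\,J^1_G(u,\Rd)=2\norm{G}_{L^1(\Rd)}\,J_G(u,\Rd),
\end{equation*}
so the sum is $4\norm{G}_{L^1(\Rd)}J_G(u,\Rd)$, i.e.\ \emph{twice} the bound printed in the first display of the lemma. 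Your closing claim that $A_1+A_2$ ``yields the stated bound'' is therefore off by a factor of two.

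This discrepancy is not a defect of your argument but of the statement: with the constant $2$ the first display is actually false. In $d=1$ take $G=\tfrac12\chi_{[-1,1]}$ (so $\norm{G}_{L^1}=1$) and $u=\chi_{[0,+\infty)}$; then $\int_{\R}\abs{u(x+h)-u(x)}\de x=\abs{h}$, hence the left-hand side equals $\int_{-2}^{2}\tfrac14(2-\abs{h})\abs{h}\de h=\tfrac23$, while $2\norm{G}_{L^1}J_G(u,\R)=\int_{-1}^{1}\tfrac12\abs{h}\de h=\tfrac12<\tfrac23$. The inequality your computation genuinely proves, namely the left-hand side bounded by $2\norm{G}_{L^1(\Rd)}J^1_G(u,\Rd)=4\norm{G}_{L^1(\Rd)}J_G(u,\Rd)$, is the correct one; specialising to $u=\chi_E$ and using $J^1_G(\chi_E,\Rd)=2\Per_G(E)$ it gives precisely \eqref{eq:G-ineq}, which is the only form of the lemma invoked later (in the proof of Theorem \ref{stm:compact}). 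So your proof stands once the constant in the first display is doubled; do not try to match the constant $2$ as printed.
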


\begin{thm}[Compactness criterion]\label{stm:compact}
	For any $n\in\N$, let us consider $\epsilon_n>0$ and a measurable $E_n\subset \Omega$.
	If $\epsilon_n \to 0$ and
	\begin{equation*}
	\frac{1}{\eps_n}J_{\epsilon_n}^1(E_n, \Omega) \quad\mbox{is uniformly bounded,}
	\end{equation*}
	there exist a subsequence $\set{E_{n_k}}$ and a set $E$ with finite perimeter in $\Omega$
	such that $\set{E_{n_k}}$ converges to $E$ in $L^1(\Omega)$. 
\end{thm}
\begin{proof}
	To avoid inconvenient notation, in what follows we omit the index $n$
	and we write $E_\epsilon$ in place of $E_n$.
	
	The idea is to build a second sequence $\set{v_\eps}$ 
	that is asymptotically equivalent to $\set{E_\epsilon}$ in $L^1(\Rd)$,
	i.e. $\norm{v_\eps -\chi_{E_\eps}}_{L^1(\Rd)}=O(\eps)$,
	but that in addition has better compactness properties.
	To this purpose, we consider a positive function $\phi\in C^\infty_c(\Rd)$
	and we set $v_\eps\eqdef \phi_\eps \ast \chi_{E_\eps}$,
	where
		\[\phi_\epsilon(x)\eqdef\frac{1}{\left(\int_{\Rd}\phi(h)\de h\right)\epsilon^d}
					\phi\left(\frac{x}{\eps}\right).\]
	Notice that any $v_\epsilon$ is supported in some ball $B$ containing $\Omega$.
	Easy computations show that
		\begin{equation}\label{eq:asympteq}
		\int_{\Rd} \abs{v_\eps(x)-\chi_{E_\eps}(x)}\de x
			\leq \int_{\Rd}\int_{\Rd} \abs{\phi_\eps(h)}\abs{\chi_{E_\eps}(x+h)-\chi_{E_\eps}(x)}\de h\de x
		\end{equation}
	and
		\begin{equation}\label{eq:BVbound}
		\begin{split}
		\int_B \abs{\nabla v_\eps(x)}\de x & =\int_{\Rd} \abs{\nabla v_\eps(x)}\de x \\
				& \leq \int_{\Rd}\int_{\Rd}\abs{\nabla\phi_\eps(h)}\abs{\chi_{E_\eps}(x+h)-\chi_{E_{\epsilon}}(x)}\de h\de x
		\end{split}
		\end{equation}
	(to get the last bound we took advantage of the equality $\int_{\Rd}\abs{\nabla\phi_\eps}=0$);
	we claim that it is possible to choose $\phi$ in such a way that
	\eqref{eq:asympteq} yields asymptotic equivalence of the two sequences and that 
	\eqref{eq:BVbound} provides a uniform bound on the $\mathrm{BV}$-norm of $\set{v_\eps}$.
	If our claim is true, on one hand, up to extraction of subsequences,
	$v_\epsilon$ converges to some $v\in\mathrm{BV}(B)$ in $L^1(B)$;
	on the other, this $v$ must be the characteristic function of some $E\subset\Omega$,
	because it is a $L^1(\Rd)$ cluster point of $\chi_{E_\eps}$.
	This concludes the proof.
	
	Now let us show that the claim holds.
	Define the truncation operator
		\[T_1(s)\eqdef\begin{cases}
				s & \mbox{if } \abs{s}\leq 1 \\
				1 & \mbox{otherwise}		
		\end{cases}\]
	and the truncated kernel $G\eqdef T_1 \circ K\in L^1(\Rd)\cap L^\infty(\Rd)$.
	We observe that the convolution $G*G$ is positive and continuous
	and, therefore, we can build a positive $\phi\in C_c^{\infty}(\Rd)\setminus\set{0}$ such that
		\begin{equation}
		\phi  \leq G*G \quad \text{and} \quad |\nabla \phi | \leq G*G. 
		\end{equation}
	Let us set
		\[G_\epsilon(h)\eqdef \frac{1}{\epsilon^d}G\left(\frac{h}{\eps}\right).\]
	With this choice, from \eqref{eq:asympteq} and \eqref{eq:BVbound} we obtain
		\begin{equation}\label{eq:asympteq2}
		\int_{\Rd} \abs{v_\eps(x)-\chi_{E_\eps}(x)}\de x
		\leq \int_{\Rd}\int_{\Rd} \abs{G_\eps\ast G_\eps(h)}\abs{\chi_{E_\eps}(x+h)-\chi_{E_\eps}(x)}\de h\de x
		\end{equation}
	and
		\begin{equation*}
		\int_{\Rd} \abs{\nabla v_\eps(x)}\de x
		\leq \frac{1}{\eps}\int_{\Rd}\int_{\Rd}\abs{G_\eps\ast G_\eps(h)}\abs{\chi_{E_\eps}(x+h)-\chi_{E_{\epsilon}}(x)}\de h\de x.
		\end{equation*}
	Both the right-hand sides of these inequalities can be bounded above by Lemma \ref{stm:G-ineq};
	we detail the estimates for \eqref{eq:asympteq2} only, the others being identical.
	Thanks to \eqref{eq:G-ineq}, we have 
		\begin{align*}
		\int_{\Rd}\int_{\Rd} \abs{G_\eps\ast G_\eps(h)}&\abs{\chi_{E_\eps}(x+h)-\chi_{E_\eps}(x)}\de h\de x \\
			& \leq 4 \norm{G}_{L^1(\Rd)}\Per_{G_\eps}(E_\eps) \\
			& \leq 4 \norm{G}_{L^1(\Rd)}\Per_{K_\eps}(E_\eps) \\
			& = 4 \norm{G}_{L^1(\Rd)} \left(\frac{1}{2}J_\eps^1(E_\eps,\Omega)+J_\epsilon^2(E_\eps,\Omega)\right) \\
			& = 4 \norm{G}_{L^1(\Rd)} \left(\frac{1}{2}J_\eps^1(E_\eps,\Omega)+L_\eps(E_\eps,E_\epsilon^\co\cap\Omega^\co)\right)
		\end{align*}
	and hence, in view of the current hypotheses and of Proposition \ref{stm:locdef}, we deduce
		\[\int_{\Rd}\int_{\Rd} \abs{G_\eps\ast G_\eps(h)}\abs{\chi_{E_\eps}(x+h)-\chi_{E_\eps}(x)}\de h\de x = O(\eps),\]
	as desired.		
\end{proof}
	\subsection{Asymptotic behaviour on finite perimeter sets: the upper limit inequality}\label{ssec:uli}It is possible to describe the asymptotic behaviour of the functional $J_\epsilon(\,\cdot\,,\Omega)$
when it is evaluated on finite perimeter sets.
This also provides an insight about the upper limit inequality
that is to be discussed later on in this subsection.

We extend a result
by Maz\'on, Rossi and Toledo contained in \cite{mrt:nonlocalperimeter}:
differently from that work, here we are able to cope also with unbounded domains.

\begin{prop}\label{stm:pointconv} Let $\tilde \Omega$ be an open subset of $\Rd$
with Lipschitz boundary, not necessarily bounded, or the whole space $\Rd$
and let conditions \textbf{C2'} and \textbf{C3} be satisfied.  
Then, if $E$ is a finite perimeter set in $\tilde \Omega$
such that $E\cap \tilde\Omega$ is bounded, it holds
	\begin{equation}\label{eq:limsup-mrt}
	\lim_{\epsilon\to0}\frac{1}{2\eps}J^1_\epsilon(E,\tilde\Omega) = c_K\Per(E,\tilde\Omega)
	\end{equation}
and if $E$ is also a finite perimeter set in $\Rd$ we have
	\begin{equation}\label{eq:limJ2}
	\lim_{\epsilon\to0}\frac{1}{\eps}J^2_\epsilon(E,\tilde\Omega) = c_K\call{H}^{d-1}(\partial^\ast E\cap\partial\tilde\Omega). 
	\end{equation}
\end{prop}

The proof of the analogous result proposed in \cite{mrt:nonlocalperimeter}
only relies on the approximation
of the total variation of the gradient of a function
by means of weighted integrals of the difference quotient.
Precisely, for $\epsilon>0$, consider a collection
of positive functions $\bar\rho_\eps\colon [0,+\infty)\to[0,+\infty)$ such that
\begin{equation*}
\lim_{\epsilon\to0}\int_{\delta}^{+\infty}\bar{\rho}_\epsilon(r)r^{d-1}\de r = 0 \quad\mbox{for all } \delta>0;
\end{equation*}
also, define
$\rho_\epsilon(h)\eqdef\bar\rho_\epsilon(r)$ whenever $h\in\Rd$ and $\abs{h}=r$
and assume that
\[\int_{\Rd}\rho_\epsilon(h)\de h = 1.\]
In \cite{dav:onanopen}, D\'avila proved the following:

\begin{thm}\label{stm:davila}
	Let $\Omega$ and $\set{\rho_\eps}_{\epsilon>0}$ be as above.
	Then, for any $u\in\mathrm{BV}(\Omega)$
	\begin{equation}
	\lim_{\epsilon\to0}\int_\Omega\int_\Omega \rho_\epsilon(y-x)\frac{\abs{u(y)-u(x)}}{\abs{y-x}}\de y\de x =  
	\alpha_{1,d} \vass{\D u}(\Omega).
	\end{equation}
	with $\alpha_{1,d}$ as in \eqref{eq:h=hd}.
\end{thm}

\begin{proof}[Proof of Proposition \ref{stm:pointconv}]
Since $E\cap\tilde\Omega$ is bounded, there exists an open ball $B$
such that the closure of $E\cap\tilde\Omega$ is contained in $B$
and in particular $\mathrm{dist}(E\cap\tilde\Omega, B^\co)>0$.
By \eqref{eq:nonlocJK1},
	\[\frac{1}{\eps}J_\eps^1(E,\tilde\Omega) = \frac{1}{\eps}J_\eps^1(E,\tilde\Omega\cap B)
							+ \frac{1}{\eps}L_\epsilon(E\cap\tilde\Omega,E^\co\cap\tilde\Omega\cap B^\co); \]
and the second summand in the right-hand side is negligible when $\epsilon\to0$
thanks to Proposition \ref{stm:locdef}.

The previous reasoning shows that, as far as $J^1_\epsilon$ is concerned,
we can always suppose that $\tilde\Omega$ is an open and bounded subset with Lipschitz boundary,
so that we may invoke Theorem \ref{stm:davila}: if we set
	\[\rho_\eps(h)=\frac{\alpha_{1,d}}{2c_K}K_\eps(h)\abs{\frac{h}{\eps}}\]
the conclusion of that result reads
	\[\lim_{\epsilon\to0}\frac{\alpha_{1,d}}{2c_K\eps}J^1_\epsilon(E,\tilde\Omega) = \alpha_{1,d} \Per(E,\tilde\Omega), \]
that is \eqref{eq:limsup-mrt}.

To show that \eqref{eq:limJ2} holds,
we consider the nonlocal interaction associated with $K_\epsilon$ between $E$ and $E^\co$
and we decompose it according to the partition $\{\tilde \Omega,\tilde \Omega^\co\}$:
	\[\int_E\int_{E^\co}K_\epsilon(y-x)\de x \de y = 
	J^1_\epsilon(E,\tilde\Omega)+J^1_\epsilon(E,\tilde\Omega^\co) + 2J^2_\epsilon(E,\tilde\Omega)\]
since the topological boundary of $\tilde\Omega$ is $\call{L}^d$-negligible,
we are allowed to apply \eqref{eq:limsup-mrt} to each summand:
we obtain
	\[\lim_{\epsilon\to0}\frac{1}{\eps}J^2_\epsilon(E,\tilde\Omega) =
		c_K\Per(E) - c_K\Per(E,\tilde\Omega) - c_K\Per(E,\tilde\Omega^\co)\]
and the conclusion follows.
\end{proof}

\begin{oss}
As an immediate corollary of the last result, we get the characterisation
	\begin{equation}\label{eq:car-cK}
		c_K = \lim_{\eps\to 0}\frac{1}{2\eps}J_\epsilon^1(H,U).
	\end{equation}
Actually, we shall need further equivalent descriptions of $c_K$,
see Lemma \ref{stm:lemma-bK2}.		
\end{oss}

Now, we turn to the upper limit inequality.
We need to show that whenever $E$ is a measurable set
there exists a recovery family $\set{E_\eps}$, i.e.
a family that converges to $E$ in $L^1_\mathrm{loc}(\Rd)$ such that
	\[\limsup_{\eps\to 0}\frac{1}{\eps}J_\epsilon(E_\eps,\Omega) \leq c_K \Per(E,\Omega).\]
First of all, we can assume that $E$ has finite perimeter in $\Omega$,
otherwise any family that converges to $E$ is a recovery one.
Secondly, let us assume that $E$ is a Caccioppoli set in the whole space $\Rd$;
if we retain a \emph{transversality condition} for $E$ and $\Omega$,
that is $\call{H}^{d-1}(\partial^\ast E \cap \partial\Omega)=0$,
then we can invoke Proposition \ref{stm:pointconv}
to deduce that the choice $E_\eps = E$ for all $\epsilon>0$ defines a recovery family.
Hence, the proof of statement \ref{stm:Gammasup} in Theorem \ref{stm:Gammaconv} is concluded
if we show that the class of finite perimeter sets in $\Rd$
that are transversal to $\Omega$ is dense in energy.
This is the content of the next Lemma:

	\begin{lemma}
	Let $E$ be a finite perimeter set in $\Omega$.
	Then, there exists a family $\set{E_\epsilon}_{\epsilon>0}$ of sets with smooth boundaries
	such that $\call{H}^{d-1}(\partial E_\epsilon\cap\partial\Omega)=0$ and
	that $E_\eps \to E$ in $L^1_{\mathrm{loc}}(\Rd)$ and $\Per(E_\epsilon,\Omega)\to\Per(E,\Omega)$.
	\end{lemma}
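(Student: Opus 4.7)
The plan is to carry out a classical mollification-and-slicing construction, prefaced by an extension step that compensates for the fact that $E$ is only assumed to have finite perimeter in $\Omega$. First, using the $\mathrm{BV}$ extension theorem --- applicable because $\Omega$ is bounded with Lipschitz boundary --- I would produce a compactly supported $w \in \mathrm{BV}(\Rd;[0,1])$ with $w\rvert_\Omega = \chi_E$, and set $\tilde E \eqdef \set{w > 1/2}$. Then $\tilde E$ coincides with $E$ on $\Omega$, has finite perimeter in $\Rd$, and by matching the interior trace of $\chi_E$ on $\partial\Omega$ from the outside one can arrange $\abs{\D \chi_{\tilde E}}(\partial \Omega) = 0$; in particular, $\Per(\tilde E,\Omega) = \Per(E,\Omega)$.

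Next, I would fix a positive, radial mollifier $\rho \in \cinfcomp{\Rd}$ with unit mass, set $\rho_\delta(x) \eqdef \delta^{-d} \rho(x/\delta)$ and $u_\delta \eqdef \rho_\delta \ast \chi_{\tilde E} \in \Cont^\infty(\Rd;[0,1])$. Standard properties of mollification give $u_\delta \to \chi_{\tilde E}$ in $L^1(\Rd)$, and since $\abs{\D \chi_{\tilde E}}(\partial \Omega) = 0$,
\[\lim_{\delta \to 0} \int_\Omega \abs{\grad u_\delta}\de x = \abs{\D\chi_{\tilde E}}(\Omega) = \Per(E, \Omega).\]
The coarea formula applied to the smooth function $u_\delta$ yields
\[\int_0^1 \Per(\set{u_\delta > t},\Omega)\de t = \int_\Omega \abs{\grad u_\delta}\de x.\]
By Sard's theorem, $\set{u_\delta > t}$ has smooth boundary for a.e. $t \in (0,1)$; moreover, the hypersurfaces $\set{u_\delta = t}$ are pairwise disjoint for distinct $t$ and $\call{H}^{d-1}(\partial\Omega) < +\infty$, so only countably many $t$ can satisfy $\call{H}^{d-1}(\set{u_\delta = t} \cap \partial \Omega) > 0$, ensuring the transversality condition for a.e. $t$ as well.

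To select the right level, I would apply Fubini to $\int_\Rd \abs{u_\delta - \chi_{\tilde E}}\de x \to 0$ to conclude that, for a.e. $t$, $\set{u_{\delta_n} > t} \to \tilde E$ in $L^1_\mathrm{loc}(\Rd)$ along a subsequence $\delta_n \to 0$. A Fatou-type diagonal argument, combining lower semicontinuity of $\Per(\,\cdot\,,\Omega)$ with the convergence of the coarea averages to $\Per(E,\Omega)$, then yields levels $t_n \in (0,1)$ such that $E_n \eqdef \set{u_{\delta_n} > t_n}$ simultaneously has smooth boundary, satisfies $\call{H}^{d-1}(\partial E_n \cap \partial \Omega) = 0$, converges to $\tilde E$ in $L^1_\mathrm{loc}(\Rd)$, and $\Per(E_n,\Omega) \to \Per(E,\Omega)$. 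As a last touch, to restore $L^1_\mathrm{loc}$-convergence to the original $E$ rather than to $\tilde E$, I would modify each $E_n$ in the complement of a slightly larger open set containing $\overline\Omega$ by a separate smooth approximation of $E$ obtained again via generic level sets of a mollified characteristic function; this exterior modification leaves $\Per(E_n,\Omega)$ and the transversality with $\partial\Omega$ untouched. I expect the chief obstacle to be precisely the joint selection of $t_n$: smoothness, transversality, $L^1_\mathrm{loc}$-convergence, and perimeter convergence each hold for a.e. $t$ after extraction, but extracting a single $t_n$ realising all four simultaneously for each $n$ requires a careful Fubini/Fatou diagonal argument.
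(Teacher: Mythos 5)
Your proposal is correct, and it reaches the conclusion by a more hands-on route than the paper. The paper's own proof is two lines: it cites the standard smooth approximation of finite perimeter sets as a black box (a family $F_\eps$ with $F_\eps\to E$ in $L^1$ and $\Per(F_\eps,\overline\Omega)\to\Per(E,\Omega)$) and then restores transversality by perturbing each $F_\eps$ within the signed-distance family $F_{\eps,t}=\set{\mathrm{dist}(\cdot,F_\eps)-\mathrm{dist}(\cdot,F_\eps^\co)\leq t}$; since the boundaries $\partial F_{\eps,t}$ are pairwise disjoint, at most countably many values of $t$ can give $\call{H}^{d-1}(\partial F_{\eps,t}\cap\partial\Omega)>0$, and $L^1$-distance and perimeter depend continuously on $t$ in the tubular neighbourhood. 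You instead re-derive the approximation theorem from scratch (BV extension, mollification, coarea, Sard) and build transversality in at the level-selection stage via the same disjointness/countability trick. What your version buys is that it makes explicit the step the paper sweeps under \enquote{standard approximation results}: since $E$ is only assumed to have finite perimeter \emph{in} $\Omega$, one genuinely needs the extension with $\abs{\D\chi_{\tilde E}}(\partial\Omega)=0$ to get $\Per(F_\eps,\overline\Omega)\to\Per(E,\Omega)$ rather than just $\Per(F_\eps,\Omega)\to\Per(E,\Omega)$, and your exterior fix-up addresses the $L^1_{\mathrm{loc}}(\Rd)$-convergence to the possibly wild set $E\cap\Omega^\co$, which the paper also glosses over. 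The price is the simultaneous selection of $t_n$, which you rightly flag as the delicate point; it does go through, e.g. by choosing $t$ with $\Per(\set{u_\delta>t},\Omega)\leq\int_\Omega\abs{\grad u_\delta}+\sqrt{\delta}$ (Chebyshev gives a set of admissible $t$ of measure at least of order $\sqrt{\delta}$, while the set of $t$ where the $L^1$-distance to $\tilde E$ exceeds $C\sqrt{\delta}$ has measure $O(\delta)$ because $\norm{u_\delta-\chi_{\tilde E}}_{L^1}=O(\delta)$). Two small repairs: take $\tilde E=\set{w>t_0}$ for a generic level $t_0$ (coarea) rather than $t_0=1/2$, so that $\tilde E$ is guaranteed to have finite perimeter and no perimeter mass on $\partial\Omega$; and let the neighbourhoods used in the exterior modification shrink with $n$, otherwise convergence to $E$ fails on the fixed annulus around $\overline\Omega$.
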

	\begin{proof}
		By standard approximation results for finite perimeter sets,
		there exists a family $\set{F_\eps}_{\eps>0}$ of open sets with smooth boundaries
		such that $F_\eps$ converges to $E$ in $L^1(\Rd)$ 
		and that $\Per(F_\eps,\bar \Omega)$ converges to $\Per(E,\Omega)$.
		Also, notice that in the family
		\[F_{\epsilon,t}\eqdef\set{x : \mathrm{dist}(x,F_\epsilon)-\mathrm{dist}(x,F_\epsilon^\co)\leq t}\]
		there must be some $E_\eps \eqdef F_{\epsilon,t^*}$
		which is smooth, transversal to $\Omega$
		and close to $F_\eps$ in $L^1$ and in perimeter.
	\end{proof}

	\subsection{Density estimates: the lower limit inequality}\label{ssec:lli} Let us focus on the proof of statement \ref{stm:Gammainf} in Theorem \ref{stm:Gammaconv}.
Given $E\in\call{M}$ and any family $\set{E_\epsilon}_{\epsilon>0}$
that converges to $E$ in $L^1_\mathrm{loc}(\Rd)$,
we want to show that
	\begin{equation}\label{eq:Gammainf}
	c_K\Per(E,\Omega)\leq \liminf_{\epsilon\to0} \frac{1}{2\eps}J_\epsilon^1(E_\eps,\Omega).
	\end{equation}
Observe that we can assume that the right-hand side is finite,
otherwise the inequality holds trivially, and
that the lower limit is a limit.
Therefore, the ratios $\frac{1}{\eps}J_\eps^1(E_\eps,\Omega)$ are bounded
and, in view of Theorem \ref{stm:compact},
$E$ is a Caccioppoli set in $\Omega$.

The first step of the approach \textit{\`a la} Fonseca-M\"uller
amounts to reducing the proof of \eqref{eq:Gammainf}
to the validity of a suitable density estimate.
To this aim, we introduce a family of positive measures
such that the total variation on $\Omega$ of each of them
is equal to $\frac{1}{2\eps}J_\epsilon^1(E_\eps,\Omega)$.
Namely, when $x\in\Omega$, let us set 
	\[ f_\eps(x) \eqdef \begin{cases}
				\displaystyle{\frac{1}{2\epsilon}\int_{E_\eps^\co\cap\Omega}K_\eps(y-x)\de y} & \text{if } x\in E_\eps \\
								\\
				\displaystyle{\frac{1}{2\epsilon}\int_{E_\eps\cap\Omega}K_\eps(y-x)\de y} & \text{if } x\in E_\eps^\co
		\end{cases}
	\]
and $\nu_\eps \eqdef f_\eps\,\call{L}^d \llcorner \Omega$.
In this way,
	\[\norm{\nu_\eps}\eqdef \abs{\nu_\eps}(\Omega)=\frac{1}{2\eps}J_\epsilon^1(E_\eps,\Omega);\]
since the right-hand side in the latter equality is uniformly bounded w.r.t. $\epsilon$,
we deduce that there exists a finite positive measure $\nu$ on $\Omega$
such that $	\nu_\eps\rightharpoonup^* \nu$ as $\epsilon \to 0$
and hence
	\[\liminf_{\epsilon\to0}\norm{\nu_\eps}\geq \norm{\nu}.\]
Because of the inequality above,
the conclusion \eqref{eq:Gammainf} follows if we prove that
	\begin{equation}\label{eq:tvineq}
		\norm{\nu}\geq c_K \Per(E,\Omega);
	\end{equation}
recalling \eqref{eq:per-Haus},
if we denote by $\mu$ the perimeter measure of $E$,
we see that \eqref{eq:tvineq} in turn is implied by
	\begin{equation}\label{eq:RNderineq}
	 \der{\nu}{\mu} (x) \geq c_K \quad \text{for } \mu\text{-a.e. } x \in \Omega,
	\end{equation}
where the left-hand side is the Radon-Nikodym derivative of $\nu$ w.r.t. $\mu$.
Summing up, the proof is concluded
if we show that \eqref{eq:RNderineq} holds.
This can be done by recovering at first a ``natural'' bound for the derivative (Lemma \ref{stm:lemma-bK1})
and then by proving that this bound is indeed the desired one (Lemmma \ref{stm:lemma-bK2}).

	\begin{lemma}\label{stm:lemma-bK1}
	Keeping the assumptions and the notation above,
	it holds
		\begin{equation*}
		\der{\nu}{\mu} (x) \geq b_K \quad\mbox{for every } x \in \partial^\ast E\cap\Omega.
		\end{equation*}
	where		
		\begin{equation}\label{eq:bK}
			b_K=\inf\set{\liminf_{\epsilon\to0} \frac{1}{2\eps} J^1_\eps\left(E_\eps,U\right) :
					E_\eps \to H \mbox{ in } L^1(U)}.
		\end{equation}
	\end{lemma}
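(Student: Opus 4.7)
The argument is a Fonseca--Müller-style localisation: we identify $\der{\nu}{\mu}(x_0)$ as the limit of the mass ratios $\nu(Q_{r_n})/\mu(Q_{r_n})$ along a family of rotated cubes shrinking to $x_0$, rewrite the underlying local energy as $J^1_\delta$ on the unit cube $U$ via a change of variables, and then invoke the definition of $b_K$ on a suitable diagonal sequence.

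Fix $x_0 \in \partial^\ast E \cap \Omega$ and let $R_{x_0} \in SO(d)$ be the rotation furnished by \eqref{eq:blowup}, so that $(E-x_0)/r \to R_{x_0}H$ in $L^1_\mathrm{loc}(\Rd)$ as $r \to 0^+$. Set $Q_r \eqdef x_0 + r R_{x_0}(U)$. Since $\nu$ is a finite positive measure, the set $\set{r>0 : \nu(\partial Q_r)>0}$ is at most countable, so we can extract $r_n \to 0^+$ with $\nu(\partial Q_{r_n})=0$ for every $n$. For such $r_n$ the weak-$\ast$ convergence $\nu_\eps \rightharpoonup \nu$ gives $\lim_{\eps\to 0}\nu_\eps(Q_{r_n}) = \nu(Q_{r_n})$, while the blow-up yields $r_n^{1-d}\mu(Q_{r_n}) \to \call{H}^{d-1}(\partial H \cap U) = 1$; the differentiation theorem then provides $\der{\nu}{\mu}(x_0) = \lim_n \nu(Q_{r_n})/\mu(Q_{r_n})$.

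Next, the elementary estimate $\nu_\eps(Q_{r_n}) \geq (2\eps)^{-1}J^1_\eps(E_\eps, Q_{r_n})$ follows at once from the definition of $f_\eps$ whenever $Q_{r_n}\subset\Omega$ (eventually true). Setting $\delta \eqdef \eps/r_n$ and $F^{r_n}_\eps \eqdef R_{x_0}^{-1}((E_\eps - x_0)/r_n)$, the substitution $x = x_0 + r_n R_{x_0} w$, $y = x_0 + r_n R_{x_0} z$ -- together with the radial symmetry \textbf{C3}, which yields $K(R_{x_0}(z-w)/\delta) = K((z-w)/\delta)$ -- produces the identity
\[
\frac{1}{2\eps}J^1_\eps(E_\eps, Q_{r_n}) = \frac{r_n^{d-1}}{2\delta} J^1_\delta(F^{r_n}_\eps, U).
\]
At fixed $r_n$, $F^{r_n}_\eps \to R_{x_0}^{-1}((E-x_0)/r_n)$ in $L^1(U)$ as $\eps\to 0$ by the $L^1_\mathrm{loc}$-convergence of $\set{E_\eps}$, and the latter limit tends to $H$ in $L^1(U)$ as $r_n\to 0^+$ by the blow-up. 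A standard diagonal extraction therefore furnishes $\eps_n \to 0$ fulfilling (a) $\delta_n \eqdef \eps_n/r_n \to 0$, (b) $F_n \eqdef F^{r_n}_{\eps_n} \to H$ in $L^1(U)$, and (c) $\abs{\nu_{\eps_n}(Q_{r_n}) - \nu(Q_{r_n})} < \mu(Q_{r_n})/n$. Chaining (c), the bound for $\nu_\eps(Q_{r_n})$, and the displayed identity,
\[
\frac{\nu(Q_{r_n})}{\mu(Q_{r_n})} \geq \frac{r_n^{d-1}}{\mu(Q_{r_n})}\cdot\frac{1}{2\delta_n} J^1_{\delta_n}(F_n, U) - \frac{1}{n}.
\]
The prefactor $r_n^{d-1}/\mu(Q_{r_n})$ tends to $1$; by (a), (b) and the definition \eqref{eq:bK} of $b_K$ we have $\liminf_n (2\delta_n)^{-1}J^1_{\delta_n}(F_n, U) \geq b_K$, and passing to the $\liminf$ over $n$ yields $\der{\nu}{\mu}(x_0)\geq b_K$.

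The one non-routine ingredient is the diagonal step: one has to enforce $\delta_n \to 0$ (so that $(F_n,\delta_n)$ is admissible in \eqref{eq:bK}), $F_n \to H$ in $L^1(U)$, and the $1/n$-tightness in (c) \emph{simultaneously}. This is possible because, for every $n$, the three conditions are preserved once $\eps_n$ is taken sufficiently small. Everything else -- the change of variables, the rotation trick enabled by \textbf{C3}, and the reading of $\mu(Q_{r_n})$ via the blow-up -- is standard.
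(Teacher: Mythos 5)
Your proposal is correct and follows essentially the same route as the paper's proof: blow-up along rotated cubes $Q(x_0,r)$, the lower bound $\nu_\eps(Q)\geq\frac{1}{2\eps}J^1_\eps(E_\eps,Q)$ for $Q\subset\Omega$, the rescaling identity $J^1_\eps(E_\eps,Q(x_0,r))=r^d J^1_{\eps/r}(R_{x_0}^{-1}((E_\eps-x_0)/r),U)$ enabled by \textbf{C3}, and a diagonal extraction in $(\eps,r)$ feeding into the definition of $b_K$. The only differences are cosmetic: you normalise by $\mu(Q_{r_n})$ instead of $r_n^{d-1}$ and spell out the tightness condition in the diagonal step slightly more explicitly.
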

	\begin{proof}
		Let us fix $x\in\partial^\ast E$.
		By \eqref{eq:per-Haus}, we have
			\[\der{\nu}{\mu}(x) =\lim_{r\to 0}\frac{\nu(Q(x,r))}{r^{d-1}},\]
		where $Q(x,r)\eqdef x+rR_x U$ and $R_{x}$ is chosen as to satisfy \eqref{eq:blowup}.
		Also, since the sequence $\nu_\epsilon$ weakly-$\ast$ converges to $\nu$,
		we have that $\nu(Q(x,r))=\lim_{\epsilon\to0}\nu_\eps(Q(x,r))$
		for all $r>0$ except at most a countable set $Z$ and hence
			\[\der{\nu}{\mu}(x) =\lim_{r\to 0, r\notin Z}\left[\lim_{\epsilon\to0}\frac{\nu_\eps(Q(x,r))}{r^{d-1}}\right].\]
		Via a diagonal process, it is possible to choose
		two sequences $\set{\eps_n}$ and $\set{r_n}$ such that
			\[\limsucc{n}r_n=\limsucc{n}\frac{\eps_n}{r_n}=0\]
		and that
			\[\der{\nu}{\mu}(x)=\limsucc{n}\frac{\nu_{\epsilon_n}(Q(x,r_n))}{r^{d-1}_n},\]
		or, explicitly,
			\[\begin{split}
				\der{\nu}{\mu}(x)=
				\limsucc{n}\frac{1}{2\eps_n r^{d-1}_n}
				&\left[\int_{E_{\epsilon_n} \cap Q(x,r_n)\cap \Omega}\int_{E_{\eps_n}^\co\cap\Omega}K_{\eps_n}(y-x)\de y\de x \right.\\ &\left.+\int_{E_{\epsilon_n}^\co \cap Q(x,r_n)\cap \Omega}\int_{E_{\eps_n}\cap\Omega}K_{\eps_n}(y-x)\de y. \de x\right]
			\end{split}\]
		From this equality we infer the lower bound
			\[\begin{split}
				\der{\nu}{\mu}(x)\geq
				\limsup_{\diverge{n}}\frac{1}{2\eps_n r^{d-1}_n}J^1_{\epsilon_n}(E_{\epsilon_n},Q(x,r_n)\cap \Omega) \\
				= \limsup_{\diverge{n}}\frac{1}{2\eps_n r^{d-1}_n}J^1_{\epsilon_n}(E_{\epsilon_n},Q(x,r_n))
			\end{split}\]
		(when $r_n$ is small enough, $Q(x,r_n)\subset \Omega$);
		moreover, by means of a change of variables and \eqref{eq:rotinv}, we find
			\[J^1_{\epsilon_n}(E_{\epsilon_n},Q(x,r_n))
				=r_n^d J^1_{\frac{\epsilon_n}{r_n}}\left(R_x^{-1}\left(\frac{E_{\epsilon_n}-x}{r_n}\right),U\right)
				\]
		and this, plugged in the last inequality, yields
			\[\der{\nu}{\mu}(x)\geq \limsup_{\diverge{n}}\frac{r_n}{2\eps_n}
					J^1_{\frac{\epsilon_n}{r_n}}\left(R_x^{-1}\left(\frac{E_{\epsilon_n}-x}{r_n}\right),U\right).\]
		Now, thanks to our choice of $R_x$ we have that
			\[R_x^{-1}\left(\frac{E_{\epsilon_n}-x}{r_n}\right)\to H \quad \mbox{in } L^1(U)\mbox{ as }\diverge{n},\]
		and by definition of $b_K$ we conclude.	
	\end{proof}
	
	To accomplish the proof of the inferior limit inequality in Theorem \ref{stm:Gammaconv}
	we have to show that $c_K = b_K$.
	Imitating the approach of \cite{adm:gammaconvergence},
	we do this by introducing a third constant $b'_K$:
	
	\begin{lemma}\label{stm:lemma-bK2}
		For any $\delta >0$, set $U^\delta \coloneqq \set{x\in U, \mathrm{dist}(x,U^\co)\leq\delta}$ and
			\[b'_K \eqdef \inf\set{\liminf_{\epsilon\to0} \frac{1}{2\eps} J^1_\eps\left(E_\eps,U\right) :
				E_\eps \to H \mbox{ in } L^1(U) \mbox{ and } E_\epsilon\cap U^\delta = H \cap U^\delta}.\]
		Under the previous assumptions, $c_K = b'_K = b_K$.			
	\end{lemma}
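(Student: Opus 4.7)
We prove the chain $c_K = b'_K = b_K$ in three moves.

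\textbf{Step 1 (easy bounds).} Every admissible sequence for $b'_K$ is a fortiori admissible for $b_K$, so $b_K \leq b'_K$. Moreover, the constant sequence $E_\eps = H$ satisfies the boundary condition $E_\eps \cap U^\delta = H \cap U^\delta$ trivially, and \eqref{eq:car-cK} gives $\lim_{\eps \to 0} \frac{1}{2\eps} J^1_\eps(H, U) = c_K$. Hence $b'_K \leq c_K$.

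\textbf{Step 2 (flatness: $c_K \leq b'_K$).} Let $\set{E_\eps}$ be admissible for $b'_K$ and define $\tilde E_\eps \eqdef (E_\eps \cap U) \cup (H \cap U^\co)$, so that $\tilde E_\eps$ agrees with $H$ on $U^\co$. Because \textbf{C4} holds, Proposition \ref{stm:flatness} yields that $H$ minimises $\PerK_\eps(\,\cdot\,, U)$ among sets with this boundary datum, whence $\PerK_\eps(H, U) \leq \PerK_\eps(\tilde E_\eps, U)$. Since $\tilde E_\eps$ and $E_\eps$ coincide on $U$, we have $J^1_\eps(\tilde E_\eps, U) = J^1_\eps(E_\eps, U)$ and rearranging gives
\[
\frac{1}{2\eps} J^1_\eps(E_\eps, U) \geq \frac{1}{2\eps} J^1_\eps(H, U) - \frac{1}{\eps}\bigl[J^2_\eps(\tilde E_\eps, U) - J^2_\eps(H, U)\bigr].
\]
The bracketed difference is a finite combination of couplings $L_\eps(A,B)$ in which $A \subset E_\eps \symdif H \subset U \setminus U^\delta$ (by the $b'_K$ constraint) and $B \subset U^\co$; thus $\mathrm{dist}(A, B) \geq \delta > 0$ and Proposition \ref{stm:locdef}(ii) forces each such term divided by $\eps$ to vanish. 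Taking the $\liminf$ as $\eps \to 0$ and invoking \eqref{eq:car-cK}, we obtain $c_K \leq \liminf_{\eps \to 0} \frac{1}{2\eps} J^1_\eps(E_\eps, U)$, whence $c_K \leq b'_K$; combining with Step~1 gives $b'_K = c_K$.

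\textbf{Step 3 (surgery: $b_K \geq c_K$).} It remains to show that the boundary constraint in $b'_K$ is asymptotically costless. Given an arbitrary admissible $\set{E_\eps}$ for $b_K$, consider for $t \in (\delta/2, \delta)$ the patched sets
\[
\tilde E_\eps^t \eqdef \bigl(E_\eps \cap (U \setminus U^t)\bigr) \cup \bigl(H \cap U^t\bigr),
\]
which satisfy $\tilde E_\eps^t \cap U^{\delta/2} = H \cap U^{\delta/2}$. Expanding $J^1_\eps(\tilde E_\eps^t, U)$ by splitting $U = (U \setminus U^t) \cup U^t$ gives an ``unchanged'' term $J^1_\eps(E_\eps, U \setminus U^t)$, a ``patch'' term $J^1_\eps(H, U^t)$, and a cross-interaction across $\partial U^t$. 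Averaging the cross-interaction over $t$ via Fubini and exploiting the strong $L^1$ convergence $E_\eps \to H$ in the boundary layer $U^\delta$ together with the tail bound $\int_{\abs{h} \geq r} K(h)\de h \leq c'_K/r$, one selects $t = t_\eps$ for which the cross-interaction divided by $\eps$ is $o(1)$. Therefore $\tilde E_\eps \eqdef \tilde E_\eps^{t_\eps}$ is admissible for $b'_K$ with width $\delta/2$ and
\[
\liminf_{\eps \to 0} \frac{1}{2\eps} J^1_\eps(\tilde E_\eps, U) \leq \liminf_{\eps \to 0} \frac{1}{2\eps} J^1_\eps(E_\eps, U).
\]
By Step~2 applied with width $\delta/2$, the left-hand side is bounded below by $c_K$, so $c_K \leq b_K$, closing the chain.

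The principal obstacle lies in Step~3: nonlocality of $J^1_\eps$ produces long-range cross-interactions between the kept region $U \setminus U^t$ and the patched region $U^t$ across the (possibly non-smooth) slicing surface $\partial U^t$, and these interactions are not a priori $o(\eps)$ for a fixed $t$. The Fubini averaging over $t \in (\delta/2, \delta)$, combined with the vanishing $L^1$ defect of $E_\eps$ relative to $H$, is what makes the cost of the surgery negligible in the limit.
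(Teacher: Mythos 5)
Your Steps 1 and 2 follow the paper's proof almost verbatim: the inequalities $c_K\geq b'_K\geq b_K$ come from \eqref{eq:car-cK}, and $c_K\leq b'_K$ is obtained by extending the competitors by $H\cap U^\co$, invoking the flatness Proposition \ref{stm:flatness}, and killing the difference of the nonlocal terms with Proposition \ref{stm:locdef}, since the constraint forces the relevant couplings to involve $U\setminus U^\delta$ and $U^\co$, which are at distance $\geq\delta$. Step 3 is where you diverge: the paper proves $b'_K\leq b_K$ via the Gluing Proposition \ref{stm:gluing}, i.e.\ a smooth interpolation $w=\phi\chi_{E_\eps}+(1-\phi)\chi_H$ followed by the coarea formula to extract a level set, whereas you slice with sharp interfaces $\partial U^t$ and average over $t\in(\delta/2,\delta)$. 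These are genuinely interchangeable devices (the coarea selection of $t^\ast$ in the paper plays the role of your Fubini selection of $t_\eps$), and both produce the same error $O\bigl(\tfrac{\eps c'_K}{\delta}\abs{(E_\eps\symdif H)\cap U}\bigr)$, which vanishes after division by $\eps$ because $E_\eps\to H$ in $L^1(U)$. So the route is sound.

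There is, however, one concrete slip in Step 3. Your decomposition correctly produces the patch term $J^1_\eps(H,U^{t_\eps})$, but your final displayed inequality then silently discards it. It cannot be discarded: by \eqref{eq:limsup-mrt}, $\frac{1}{2\eps}J^1_\eps(H,U^{t_\eps})$ contributes $c_K\Per(H,U^{\delta})=c_K\call{H}^{d-1}(\set{x_d=0}\cap U^{\delta})>0$ in the limit, so the correct conclusion of the surgery is
\[
\liminf_{\eps\to0}\frac{1}{2\eps}J^1_\eps(\tilde E_\eps,U)\;\leq\;\liminf_{\eps\to0}\frac{1}{2\eps}J^1_\eps(E_\eps,U)+c_K\Per\!\left(H,U^{\delta}\right),
\]
which yields only $c_K\leq b_K+c_K\Per(H,U^\delta)$. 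You must then let $\delta\to0$ to close the chain — exactly what the paper does with its term $c_K\Per(H,U^{\delta+\eta})$ by sending $\eta$ and $\delta$ to zero at the end. With that correction (and with the averaging estimate for the cross term written out, which does work as you sketch it), the argument is complete.
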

	
	Because of \eqref{eq:car-cK}, one clearly has $c_K \geq b'_K \geq b_K$.
	The conclusion of Lemma \ref{stm:lemma-bK2} follows
	if we prove that the reverse inequalities hold as well;
	this can be achieved invoking Proposition \ref{stm:flatness} and
	the next result, which extends a similar one
	proved in \cite{adm:gammaconvergence} for $s$-perimeters:
		
		\begin{prop}[Gluing]\label{stm:gluing}
			Let us consider $E_1,E_2\in\call{M}$
			and $\delta_1,\delta_2\in\R$ such that $\delta_1 > \delta_2>0$.
			For $\delta>0$, we set
				\[\Omega^{\delta}\eqdef\set{x\in\Omega : \mathrm{dist}(x,\Omega^\co)\leq \delta}.\]
			If $J^1_K(E_i,\Omega)$ is finite for both $i=1,2$, then
			there exists $F\in\call{M}$ such that 
			\begin{enumerate}
				\item\label{stm:glue1} $F\cap(\Omega\setminus \Omega^{\delta_1}) = E_1 \cap (\Omega\setminus \Omega^{\delta_1})$
					and $F\cap \Omega^{\delta_2} = E_2 \cap \Omega^{\delta_2}$;
				\item\label{stm:glue2} $\abs{(E_1\symdif F)\cap\Omega}\leq \abs{(E_1\symdif E_2)\cap\Omega}$;
				\item for all $\eta >0$:
					\begin{equation}\label{eq:gluing}
						\begin{split}
							J_\eps^1(F,\Omega) \leq & J_\eps^1(E_1,\Omega) + J_\eps^1(E_2,\Omega^{\delta_1+\eta}) 
								+ \frac{2 \eps c'_K}{\delta_1-\delta_2}\vass{(E_1\symdif E_2)\cap\Omega} \\
								& + \frac{2 \eps}{\eta}\int_{\Omega\setminus\Omega^{\delta_1+\eta}}\int_{\Rd}
								K(h)\vass{h}\chi_{\Omega^{\delta_1}}(x+\eps h)\de h\de x.
						\end{split}
					\end{equation}
			\end{enumerate}
		\end{prop}
		\begin{proof}
			Suppose that for some function $w\colon \Omega \to [0,1]$ it holds
				\begin{equation}\label{eq:gluing2}
				\begin{split}
				J_\eps^1(w,\Omega) \leq & J_\eps^1(E_1,\Omega) + J_\eps^1(E_2,\Omega^{\delta_1+\eta}) 
								+ \frac{2 \eps c'_K}{\delta_1-\delta_2}\vass{(E_1\symdif E_2)\cap\Omega} \\
								& + \frac{2 \eps}{\eta}\int_{\Omega\setminus\Omega^{\delta_1+\eta}}\int_{\Rd}
								K(h)\vass{h}\chi_{\Omega^{\delta_1}}(x+\eps h)\de h\de x;
				\end{split}
				\end{equation}
			then, thanks to Coarea formula there exists $t^\ast\in(0,1)$
			such that \eqref{eq:gluing} holds for the superlevel $F\eqdef\set{w>t^\ast}$.
			Let us exhibit a function $w$ that fulfils \eqref{eq:gluing2}.
			
			Loosely speaking, we choose $w$ to be a convex combination
			of the data $\chi_{E_1}$ and $\chi_{E_2}$.
			Precisely, for any $u,v\colon \Omega\to[0,1]$ such that
			$J^1_K(u,\Omega)$ and $J^1_K(v,\Omega)$ are finite, let us set
			$w\eqdef\phi u + (1-\phi)v$, where $\phi\in C^{\infty}_c(\Rd)$ satisfies 
				\[0\leq \phi \leq 1 \text{ in } \Omega,\quad
				\phi=0 \text{ in } \Omega^{\delta_2}, \quad
				\phi =1 \text{ in } \Omega\setminus \Omega^{\delta_1} \quad\text{and}\quad
				\abs{\nabla \phi} \leq \frac{2}{\delta_1-\delta_2}.\]	
			We explicit the integrand appearing in $J^1_K(w,\Omega)$ and
			for $x,y\in \Omega$ we get the bounds
				\[\begin{split}
					\vass{w(y) - w(x)} \leq & \phi(y)\vass{u(y)-u(x)} + (1-\phi(y))\vass{v(y)-v(x)} \\
											& + \vass{\phi(y)-\phi(x)}\vass{v(x)-u(x)} \\
								\leq & \chi_{\set{\phi \neq 0}}(y)\vass{u(y)-u(x)} + \chi_{\set{\phi \neq 1}}(y)\vass{v(y)-v(x)} \\
											& + \vass{\phi(y)-\phi(x)}\vass{v(x)-u(x)}.
				\end{split}\]
			By our choice of $\phi$,
			$\set{\phi \neq 0} \subset \Omega\setminus \Omega^{\delta_2}$ and
			$\set{\phi \neq 1} \subset \Omega^{\delta_1}$, therefore
				\[\begin{split}
				J_\eps^1(w, \Omega)\leq & \int_\Omega\int_{\Omega\setminus \Omega^{\delta_2}} K_\eps(y-x)\vass{u(y)-u(x)}\de y\de x \\
									& + \int_\Omega\int_{\Omega^{\delta_1}} K_\eps(y-x)\vass{v(y)-v(x)}\de y\de x \\
									& + \int_\Omega\int_\Omega K_\eps(y-x)\vass{\phi(y)-\phi(x)}\vass{v(x)-u(x)}\de y\de x
				\end{split}\]
			We treat each of the integrals appearing in the right-hand side separately.
			Of course one has
				\begin{equation}\label{eq:I1}
				\int_\Omega\int_{\Omega\setminus \Omega^{\delta_2}} K_\eps(y-x)\vass{u(y)-u(x)}\de y\de x \leq J_\eps^1(u,\Omega).
				\end{equation}
			To estimate the second integral,
			we split the reference set $\Omega$ in the regions
			$\Omega^{\delta_1+\eta}$ and $\Omega\setminus\Omega^{\delta_1+\eta}$
			and this yields
				\[\begin{split}
					\int_\Omega\int_{\Omega^{\delta_1}}& K_\eps(y-x)\vass{v(y)-v(x)}\de y\de x = \\
						& \int_{\Omega^{\delta_1+\eta}}\int_{\Omega^{\delta_1}} K_\eps(y-x)\vass{v(y)-v(x)}\de y\de x \\
						& +\int_{\Omega\setminus\Omega^{\delta_1+\eta}}\int_{\Omega^{\delta_1}} K_\eps(y-x)\vass{v(y)-v(x)}\de y\de x;
				\end{split}\]
			evidently
				\[\int_{\Omega^{\delta_1+\eta}}\int_{\Omega^{\delta_1}} K_\eps(y-x)\vass{v(y)-v(x)}\de y\de x
					\leq J_\eps^1(v,\Omega^{\delta_1+\eta})\]
			and, further,
				\[\begin{split}
					\int_{\Omega\setminus\Omega^{\delta_1+\eta}}\int_{\Omega^{\delta_1}}& K_\eps(y-x)\vass{v(y)-v(x)}\de y\de x \\
					\leq  &	\frac{2\eps}{\eta}\int_{\Omega\setminus\Omega^{\delta_1+\eta}}\int_{\Omega^{\delta_1}}
							K_\eps(y-x)\frac{\vass{y-x}}{\eps}\de y\de x \\
					\leq  & \frac{2\eps}{\eta}\int_{\Omega\setminus\Omega^{\delta_1+\eta}}\int_{\Rd}
							K(h)\vass{h}\chi_{\Omega^{\delta_1}}(x+\eps h)\de h\de x,
				\end{split}\]
			so that, all in all,
				\begin{equation}\label{eq:I2}
				\begin{split}
				\int_\Omega\int_{\Omega^{\delta_1}} & K_\eps(y-x)\vass{v(y)-v(x)}\de y\de x \\
						\leq &  J_\eps^1(v,\Omega^{\delta_1+\eta}) 
							+\frac{2\eps}{\eta}\int_{\Omega\setminus\Omega^{\delta_1+\eta}}\int_{\Rd}
							K(h)\vass{h}\chi_{\Omega^{\delta_1}}(x+\eps h)\de h\de x.
				\end{split}
				\end{equation}
			Lastly,	we observe that
				\[\vass{\phi(y)-\phi(x)}\leq \frac{2}{\delta_1-\delta_2}\vass{y-x}\]
			and hence
				\begin{equation}\label{eq:I3}
				\int_\Omega\int_\Omega K_\eps(y-x)\vass{\phi(y)-\phi(x)}\vass{v(x)-u(x)}\de y\de x \leq
					\frac{2 \eps c'_K}{\delta_1-\delta_2}\int_\Omega \vass{v(x)-u(x)}\de x
				\end{equation}
			Combining \eqref{eq:I1}, \eqref{eq:I2} and \eqref{eq:I3}
			we obtain
				\[\begin{split}
					J_\eps^1(w,\Omega)\leq & J_\eps^1(u,\Omega)+J_\eps^1(v,\Omega^{\delta_1+\eta}) 
							+ \frac{2 \eps c'_K}{\delta_1-\delta_2}\norm{v-u}_{L^1(\Omega)} \\
							& + \frac{2 \eps}{\eta}\int_{\Omega\setminus\Omega^{\delta_1+\eta}}\int_{\Rd}
								K(h)\vass{h}\chi_{\Omega^{\delta_1}}(x+\eps h)\de h\de x ;
				\end{split}\]
			thus, if we pick $u= \chi_{E_1}$ and $v= \chi_{E_2}$
			we have \eqref{eq:gluing2}.
				
			It remains to check that conditions \ref{stm:glue1} and \ref{stm:glue2} hold true
			for the set $F$ defined above.
			As for the former, it suffices to recall that
			$\phi$ is supported in $\Omega\setminus\Omega^{\delta_2}$
			and that it is constantly $1$ in $\Omega\setminus\Omega^{\delta_1}$.
			On the other hand, to prove the second condition we remark that
			$x\in E_1\cap F^\co$ and $x\in E_1^\co\cap F$ imply respectively the equalities
			$w(x)=\phi(x)+(1-\phi(x))\chi_{E_2}(x)\leq t < 1$ and $w(x)=(1-\phi)\chi_{E_2}(x)>t>0$, 
			which in turn entail $x\in E_1\cap E^\co$ and $x\in E_1^\co \cap E_2$.
		\end{proof}
	
	\begin{proof}[Proof of Lemma \ref{stm:lemma-bK2}]
	We firstly show that $c_K \leq b'_K$.
	Choose arbitrarily $\delta >0$ and
	consider a family $\set{E_\epsilon}$ such that 
	$E_\eps \to H$ in $L^1(U)$ and $E_\epsilon\cap U^\delta = H \cap U^\delta$.
	We may extend each $E_\epsilon$ outside the unit cube
	putting $E_\eps \cap U^\co = H \cap U^\co$.
	Then, we invoke the third statement of Corollary \ref{stm:flatness} to infer
	$J_\eps(H,U)\leq J_\eps(E_\epsilon,U)$, which gets
		\[\frac{1}{2\eps}J^1_\eps(H,U) \leq \frac{1}{2\eps}J^1_\eps(E_\epsilon,U)
								+ \frac{1}{\eps}\left(J^2_\eps(E_\epsilon,U) - J^2_\eps(H,U)\right).\]
	The desired inequality follows if we show that
	the second summand in the right-hand side vanishes as $\epsilon$ tends to $0$.
	To this aim, we exploit our information about $\set{E_\epsilon}$,
	which provides the equalities
		\[\begin{split}
		J^2_\eps(E_\epsilon,U) & - J^2_\eps(H,U) \\
			= & L_\epsilon(E_\epsilon\cap U,H^\co\cap U^\co) + L_\epsilon(H\cap U^\co,E_\epsilon^\co\cap U) \\
				& - L_\epsilon(H\cap U,H^\co\cap U^\co) - L_\epsilon(H\cap U^\co,H^\co\cap U) \\
			= & L_\epsilon(E_\epsilon\cap (U\setminus U^\delta),H^\co\cap U^\co)
				+ L_\epsilon(H\cap U^\co,E_\epsilon^\co\cap (U\setminus U^\delta)) \\
				& - L_\epsilon(H\cap (U\setminus U^\delta),H^\co\cap U^\co)
				- L_\epsilon(H\cap U^\co,H^\co\cap (U\setminus U^\delta));
		\end{split}\]
	and from this, in view of Proposition \ref{stm:locdef}, the claim is proved.
	
	Now, we show that $b'_K \leq b_K$.
	We let $\set{E_\epsilon}$ be such that 
	$E_\eps \to H$ in $L^1(U)$ as $\epsilon$ approaches $0$ and,
	without loss of generality, that $J_{\epsilon}^1(E_{\epsilon},U)$ is finite.   
	For any $\epsilon$, we apply Proposition~\ref{stm:gluing} to $E_\epsilon$ and $H$
	and this yields a family $\set{F_\epsilon}$ with the properties that
	it $L^1$-converges to $H$ in $U$, that $F_\epsilon\cap U^\delta = H\cap U^\delta$ and that for any $\eta>0$
		\[\begin{split}
			\frac{1}{2\eps}J_\eps^1(F_\eps,U) \leq & \frac{1}{2\eps}J_\eps^1(E_\eps,U)
							+ \frac{1}{2\eps}J_\eps^1(H,U^{\delta+\eta}) 
					+ \frac{2c'_K}{\eta}\vass{(E_\eps \symdif H)\cap U} \\
					& + \frac{1}{\eta}\int_{U\setminus U^{\delta_1+\eta}}\int_{\Rd}
						K(h)\vass{h}\chi_{U^{\delta_1}}(x+\eps h)\de h\de x.
		\end{split}\]
	We notice that in view of \eqref{eq:limsup-mrt} it holds
		\[\lim_{\epsilon\to0}\frac{1}{2\eps}J_\eps^1(H,U^{\delta+\eta})=c_K \Per(H,U^{\delta+\eta})\]
	and consequently, taking the limit as $\epsilon\to0$, we get
		\[\begin{split}
			b'_K \leq & \liminf_{\epsilon\to0}\frac{1}{2\eps}J_\eps^1(F_\eps,U) \\
				 \leq & \liminf_{\epsilon\to0}\frac{1}{2\eps}J_\eps^1(E_\eps,U)
						+ c_K \Per(H,U^{\delta+\eta});
			\end{split}\]
	next, we let $\eta$ and $\delta$ vanish and,
	eventually, we take advantage of the arbitrariness
	of the family $\set{E_\eps}$ to conclude. 
	\end{proof}
	
\begin{oss}[Outlook]
The $\Gamma$-convergence result of Theorem \ref{stm:Gammaconv} might be extended
to different classes of kernels.
The first step would be dropping the assumption of strict monotonicity;
this entails the use of a different strategy
for the proof of the $\Gamma$-inferior limit inequality,
but the same conclusions would hold.
Similarly, we expect an analogous result without the radial symmetry hypothesis;
nevertheless, if $K$ is anisotropic, De Giorgi's perimeter has to be replaced with the functional
	\[\int_{\partial^\ast E} \sigma_K(\hat n(z))\de\call{H}^{d-1},\]
where the function $\sigma_K\colon \partial B(0,1) \to \R$ is a weight that depends on $K$ and
that corresponds to the constant $c_K$ appearing in this paper,
see the analysis carried out in \cite{ab:anonlocal} for ``localised'' functionals.

The cases when $K$ changes sign or when it is substituted by a Radon measure $\mu$
are also possible subjects of further study,
but the conclusions of Theorem \ref{stm:Gammaconv} might be affected.. 

Finally, it would be interesting to obtain a $\Gamma$-convergence result for multi-phase systems,
i.e. for functionals of the form
	\[J_\eps(E_1,\dots,E_N)\eqdef \sum_{0\leq i<j\leq N} a_{i,j}L_\eps(E_i,E_j),\]
where $E_1,\dots,E_N$ are measurable sets such that $\vass{E_i}>0$ and $\vass{E_i\cap E_i}=0$ for any $i,j=1,\dots,N$,
$E_0 \eqdef (\bigcup_{i=1}^N E_i)^\co$ and
the coefficients $a_{i,j}$ are positive and they satisfy $a_{i,j}\leq a_{i,k}+a_{k,j}$
for every $i,j,k=0,\dots,N$. 
As a particular instance,
if for any $i=0,\dots,N$ there exists $a_i\geq0$ such that
$a_i=a_{i,j}$ for all $j\neq i$, we get
	\[J_\eps(E_1,\dots,E_N)= \frac{1}{2}\sum_{i=0}^N a_{i}J_\epsilon(E_i,\Rd);\]
in this case, it easy to recover a $\Gamma$-convergence result from the theory we developed in this paper.
\end{oss}

\addcontentsline{toc}{section}{\refname}
\printbibliography
\end{document}